\newtheorem{theorem}{Theorem}[section]
\newtheorem{question}{Question}[section]
\newtheorem{corollary}{Corollary}[section]
\theoremstyle{definition}
\newtheorem{remark}{Remark}[section]
\newcommand{\I}{\mathcal{I}}
\newcommand{\B}{\mathcal{B}}
\newcommand{\PP}{\mathcal{P}}
\newcommand{\UU}{\mathcal{U}}
\newcommand\NN{{\mathbb N}}
\newcommand\w{{\omega}}
\newcommand{\F}{\mathcal{F}}
\begin{document}

\title{Ultrafilters on $G$-spaces}
\author{O.V. Petrenko, I.V. Protasov}
\date{}

\maketitle

\begin{abstract}
For a discrete group $G$ and a discrete $G$-space $X$, we identify the Stone-\v{C}ech compactifications $\beta G$ and $\beta X$ with the sets of all ultrafilters on $G$ and $X$, and apply the natural action of $\beta G$ on $\beta X$ to characterize large, thick, thin, sparse and scattered subsets of $X$. We use $G$-invariant partitions and colorings to define $G$-selective and $G$-Ramsey ultrafilters on $X$. We show that, in contrast to the set-theoretical case, these two classes of ultrafilters are distinct. We consider also universally thin ultrafilters on $\omega$, the $T$-points, and study interrelations between these ultrafilters and some classical ultrafilters on $\omega$.

\

{\bf MSC 2010}: 05D10, 22A15, 54H20

\

{\bf Keywords}: $G$-space, ultrafilters, ultracompanion, $G$-selective ultrafilter, $G$-Ramsey ultrafilter, $T$-point, ballean, asymorphism

\end{abstract}

By a $G$-space, we mean a set $X$ endowed with the action $G\times X \to X:(g,x)\mapsto gx$ of a group $G$. All $G$-spaces are supposed to be transitive: for any $x,y \in X$, there exists $g\in G$ such that $gx=y$. If $X=G$ and the action is the group multiplication, we say that $X$ is a regular $G$-space.

Several intersting and deep results in combinatorics, topological dynamics and topological algebra, functional analysis were obtained by means of ultrafilters on groups (see \cite{b5,b6,b7,b12,b27,b28}).

The goal of this paper is to systematize some recent and prove some new results concerning ultrafilters on $G$-spaces, and point out the key open problems.

In sections 1,2 and 3, we keep together all necessary definitions of filters, ultrafilters and the Stone-\v{C}ech compactification $\beta X$ of the discrete space $X$. We extend the action of $G$ on $X$ to the action of $\beta G$ on $\beta X$, characterize the minimal invariant subsets of $\beta X$, define the corona $\check{X}$ of $X$ and the ultracompanions of subsets of $X$.

In section 4, we give ultrafilter charecterizations of large, thick, thin, sparse and scattered subsets of $X$.

In section 5, we use $G$-invariant partitions and colorings to define $G$-selective and $G$-Ramsey ultrafilters on $X$, and show that, in contrast to the set-theoretical case, these two classes are essentially different.

In section 6, we use countable group of permutatious of $\w=\{0,1,\ldots\}$ to define thin ultrafilters on $\w$. We prove that some classical ultrafilters on $\w$ (for example, $P$- and $Q$-points) are thin ultrafilters.

We conclude the paper, showing in section 7, how all above result can be considered and interpreted in the frames of general asymptology.

\section{Filters and ultrafilters}
A family $\F$ of subsets of a set $X$ is called {\it filter} if $X\in\F, \varnothing\notin\F$ and
$$A,B\in \F, A\subseteq C \Rightarrow A\cap B \in \F, C\in \F$$
The family of all fillters on $X$ is partially ordered by inclusion $\subseteq$. A filter $\UU$ that is maximal in this ordering is called an {\it ultrafilter}. Equivalentely, $\UU$ is ultrafilter if $A\cup B\in \UU$ implies $A\in \UU$ or $B\in\UU$. This characteristic of ultrafilters plays the key role in the Ramsey Theory: to prove that, under any finite partition of $X$, at least one cell of the partition has a given property, it suffices to construct an ultrafilter $\UU$ such that each member of $\UU$ has this property.

An ultrafilter $\UU$ is called {\it principal} if $\{x\}\in\UU$ for some $x\in X$. Non-principal ultrafilters are called {\it free} and the set of all free ultrafilters on $X$ is denoted by $X^*$.

We endow a set $X$ with the discrete topology. The Stone-\v{C}ech compactification $\beta X$ of $X$ is a compact Hausdorff space such that $X$ is a subspace of $\beta X$ and any  mapping $f:X\to Y$ to a compact Hausdorff space $Y$ can be extended to the continuous mapping $f^\beta: \beta X \to Y$. To work with $\beta X$, we take the points of $\beta X$ to be the ultrafilters on $X$, with the points of $X$ identified with the principal ultrafilters, so $X^*=\beta X \setminus X$.

The topology of $\beta X$ can be defined by stating that the sets of the form $\overline{A}=\{p \in \beta X: A \in p\} $, where $A$ is a subset of $X$, are base for the open sets. For a filter $\varphi$ on $X$, the set $\overline{\varphi} = \{\overline{A}: A\in \varphi\}$ is closed in $\beta X$, and each non-empty closed subset of $\beta X$ is of the form $\overline{\varphi}$ for an appropriate filter $\varphi$ on $X$.

\section{The action of $\beta G$ on $\beta X$}

Given a $G$-space $X$, we endow $G$ and $X$ with the discrete topologies and use the universal property of the Stone-\v{C}ech compactification to define the action of $\beta G$ on $\beta X$.

Given $g\in G$, the mapping $x \mapsto gx : X \to \beta X$ extends to the continuous mapping
$$p \mapsto gp : \beta X \to \beta X.$$

We note that $gp = \{gP : P\in p\}$, where $gP=\{gx : x \in P\}$.

Then, for each $p \in \beta X$, we extend the mapping $g\mapsto gp: G \to \beta X$ to the continuous mapping
$$q\mapsto qp: \beta G \to \beta X.$$

Let $q\in \beta G$ and $p \in \beta X$. To describe a base for the ultrafilter $qp\in \beta X$, we take any element $Q\in q$ and, for every $g\in Q$, choose some element $P_g \in p$. Then $\bigcup_{g\in Q}gP_g \in qp$ and the family of subsets of this form is a base for $qp$.

By  the construction, for every $g\in G$, the mapping $p\mapsto gp:\beta X \to \beta X$ is continuous and, for every $p\in \beta X$, the mapping $q\mapsto qp: \beta G \to \beta X$ is continuous. In the case of the regular $G$-space $X$, $X=G$, we get well known (see \cite{b7}) extention of multiplication from $G$ to $\beta G$ making $\beta G$ a compact right topological semigroup. For plenty applications of the semigroup $\beta G$ to combinatorics and topological algebra see \cite{b6,b7,b12,b28}. It should be marked that, for any $q, r\in \beta G$, and $p\in \beta X$, we have $(qr)p = q(rp)$ so semigroup $\beta G$ acts on $\beta X$.

Now we define the main technical tool for study of subsets of $X$ by means of ultrafilters.

Given a subset $A$ of $X$ and an ultrafilter $p\in \beta X$ we define the {\em $p$-companion} of $A$ by
$$A_p=\{\overline{A}\cap Gp\}= \{gp: g\in G, A\in gp\}.$$
Systematically, $p$-companions will be used in section 4. Here we demonstrate only one appication of $p$-companion to characterize minimal invariant subsets of $\beta X$.
A closed subset $S$ of $\beta X$ is called {\it invariant} if $g\in G$ and $p\in S$ imply $gp\in S$. Clearly, $S$ is invariant if and only if $(\beta G)p \subseteq S$ for each $p\in S$. Every invariant subset $S$ of $\beta X$ contains minimal by inclusion invariant subset. A subset $M$ is minimal invariant if and only if $M=(\beta G)p$ for each $p\in S$. In the case of the regular $G$-space, the minimal invariant subsets coincide with minimal left ideals of $\beta G$ so the following theorem generalizes Theorem 4.39 from \cite{b7}.
\begin{theorem} Let $X$ be a $G$-space and let $p\in \beta X$. Then $(\beta G)p$ is minimal invariant if and only if, for every $A\in p$, there exists a finite subset $F$ of $G$ such that $G=FA_p$. \end{theorem}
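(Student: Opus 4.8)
The plan is to transplant the argument behind Theorem 4.39 of \cite{b7} (the characterization of minimal left ideals of $\beta G$) to the action $\beta G \times \beta X \to \beta X$, the only genuine change being the bookkeeping of which factor lives in $\beta G$ and which in $\beta X$. First I would record the two facts I will lean on. By the minimality criterion recalled just before the theorem, $(\beta G)p$ (a closed invariant set by the preceding discussion) is minimal invariant if and only if $p \in (\beta G)q$ for every $q \in (\beta G)p$; since each orbit closure $(\beta G)q$ is compact, hence closed, this amounts to producing, for every such $q$, an ultrafilter $r \in \beta G$ with $rq = p$. The engine of the whole proof is the membership formula: for $r \in \beta G$, $s \in \beta X$ and $A \subseteq X$,
$$A \in rs \iff \{g \in G : A \in gs\} \in r,$$
which I would read off directly from the base $\bigcup_{g \in R} gS_g$ of $rs$ described in Section 2: if $A \in rs$ then some base set is contained in $A$, forcing $\{g : A \in gs\}$ to contain the associated $R \in r$; conversely, if that set lies in $r$, choosing $S_g = g^{-1}A$ exhibits $A$ itself as a base element. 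Throughout I write $B_A = \{g \in G : A \in gp\} = \{g : g^{-1}A \in p\}$, the set whose image under $g \mapsto gp$ is $A_p$; the condition ``$G = FA_p$'' is read as the assertion that this trace $B_A$ is syndetic, i.e. $G = FB_A$ for some finite $F \subseteq G$.

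For the sufficiency direction I would fix $q \in (\beta G)p$, say $q = sp$ with $s \in \beta G$, and build $r$ with $rq = p$. Since both are ultrafilters on $X$, it suffices to arrange $A \in rq$ for every $A \in p$, and by the membership formula this means $\{g : A \in gq\} \in r$. Hence it is enough to show that the family $\{\, \{g : A \in gq\} : A \in p \,\}$ has the finite intersection property; any $r \in \beta G$ extending it then satisfies $p \subseteq rq$, so $p = rq$. Expanding $A \in g(sp) = (gs)p$ with the formula gives $\{g : A \in gq\} = \{g : g^{-1}B_A \in s\}$, and since $A_1 \cap \dots \cap A_n \in p$ with $B_{A_1 \cap \dots \cap A_n} \subseteq B_{A_i}$ these sets are nested downward, so the finite intersection property reduces to nonemptiness of a single $\{g : g^{-1}B_A \in s\}$. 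This is where syndeticity enters: writing $G = FB_A$, the finitely many translates $fB_A$ ($f \in F$) cover $G \in s$, so some $fB_A \in s$, and then $g = f^{-1}$ satisfies $g^{-1}B_A = fB_A \in s$.

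For the necessity direction I would argue contrapositively: suppose $B_A$ fails to be syndetic for some $A \in p$. Then $G \setminus FB_A \neq \varnothing$ for every finite $F \subseteq G$, and these complements form a filter base, so I fix an ultrafilter $q \in \beta G$ containing all of them. Now minimality of $(\beta G)p$ forces $p \in (\beta G)(qp)$, i.e. $p = (rq)p$ for some $r \in \beta G$; putting $s = rq$ we get $sp = p$. Applying the formula to $A \in p = sp$ yields $B_A \in s = rq$, hence $\{g : g^{-1}B_A \in q\}$ is nonempty, so $g_0^{-1}B_A \in q$ for some $g_0$. But taking $F = \{g_0^{-1}\}$ in the construction of $q$ gives $G \setminus g_0^{-1}B_A \in q$ as well, contradicting that $q$ is an ultrafilter.

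The routine parts are the membership formula and the filter-base verifications. The step I expect to be most delicate is keeping the left/right and $\beta G$-versus-$\beta X$ bookkeeping consistent throughout — in particular the identity $\{g : A \in g(sp)\} = \{g : g^{-1}B_A \in s\}$ (which quietly uses the associativity $(gs)p = g(sp)$ noted in Section 2) and the translation of the companion condition $G = FA_p$ into syndeticity of its trace $B_A$ in $G$, since $A_p$ itself lives in $\beta X$ while $F$ acts on the group side.
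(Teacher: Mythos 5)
Your proof is correct, and it genuinely diverges from the paper's own argument even though both rest on the same two pillars (the minimality criterion $p\in(\beta G)q$ for every $q\in(\beta G)p$, and the membership formula read off from the base description in Section 2); in effect you reproduced the Hindman--Strauss proof of Theorem 4.39, as you announced, while the paper wrote down a different argument in both directions. For ``minimal $\Rightarrow$ syndetic'' the paper does not build your single bad ultrafilter $q\supseteq\{G\setminus FB_A : F\in[G]^{<\w}\}$: instead, for \emph{every} $r\in\beta G$ it solves $q_r rp=p$, extracts $x_r\in G$ with $x_r^{-1}A\in rp$, uses continuity of the right translation $r\mapsto rp$ to pick $B_r\in r$ with $\overline{B_r}\,p\subseteq\overline{x_r^{-1}A}$, and invokes compactness of $\beta G$ to get a finite subcover of $\{\overline{B_r}: r\in\beta G\}$, from which the finite set $F$ is read off explicitly as $\{x_{r_1}^{-1},\dots,x_{r_n}^{-1}\}$. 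For ``syndetic $\Rightarrow$ minimal'' the paper argues by contradiction, separating $p$ from a proper suborbit closure $(\beta G)rp$ by a basic clopen set $\overline{A}$ and then using the covering $G=\bigcup_{f\in F}fB_A$ to find $g$ with $gB_A\in r$, whence $gA\in rp$, contradicting $xA\notin rp$ for all $x\in G$; you instead argue directly, verifying the finite intersection property of the traces $\{g: A\in gq\}$, $A\in p$, and producing $r$ with $rq=p$ for each $q\in(\beta G)p$. What each route buys: the paper's covering argument is topological and manufactures $F$ effectively from a finite subcover, while yours stays entirely inside ultrafilter algebra (two applications of the membership formula, no appeal to continuity or open covers), makes visible exactly where syndeticity is used (nonemptiness of each trace) and where downward directedness of $\{B_A : A\in p\}$ enters, and transfers verbatim from the regular case to arbitrary $G$-spaces. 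One further point in your favor: your reading of $G=FA_p$ as syndeticity of the trace $B_A=\{g\in G: A\in gp\}$ is the correct resolution of the paper's abuse of notation (as written, $A_p$ is a subset of $\beta X$, so $G=FA_p$ does not typecheck literally), and it matches how the paper itself applies Theorem 2.1 in the remark following Theorem 4.2.
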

\begin{proof}
We suppose that  $(\beta G)p$ is a minimal invariant subset and take an arbitary $r\in \beta G$. Since  $(\beta G)rp = (\beta G)p$ and $p\in (\beta G)p$, there exists $q_r\in \beta G$ such that $q_rrp=p$. Since $A\in q_rrp$, there exists $x_r\in G$ such that $A\in x_rrp$ so $x_r^{-1}A\in rp$. Then we choose $B_r\in r$ such that $\overline{x_r^{-1}A}\supseteq \overline{B_r}p$ and consider the open cover $\{\overline{B_r}: r \in \beta G\}$ of $\beta G$. By compactness of $\beta G$, there is its finite subcover $\{\overline{B_{r_1}}, \ldots, \overline{B_{r_n}}\}$, so $G=B_{r_1}\cup\ldots\cup B_{r_n}$. We put $F^{-1} = \{x_{r_1}, \ldots, x_{r_n}\}$. Then $G=(FA)_p$ and it suffices to observe that $(FA)_p=FA_p$.

To prove the converse statement, we suppose on the contrary that $(\beta G)p$ is not minimal and choose $r\in \beta G$ such that $p\notin (\beta G)rp$. Since $(\beta G)rp$ is closed in $\beta X$, there exists $A\in p$ such that $\overline{A}\cap (\beta G)rp = \varnothing$. It follows that $A\notin qrp$ for every $q\in \beta G$. Hence, $G\setminus A \in qrp$ for each $q\in \beta G$ and, in particular, $x(G\setminus A) \in rp$ for each $x\in G$. By the assumption, $gA_p\in
r$ for some $g\in G$ so $A\in g^{-1}rp$, $gA\in rp$ and we get a contradiction.
\end{proof}

\section{Dynamical equivalences and coronas}
For an infinite discrete $G$-space , we define two basic equivalences on the space $X^*$ of all free ultrafilter on $X$.

Given any $r,q \in X^*$, we say that $r, q$ are {\it parallel}  (and write $r\parallel q$) if there exists $g\in G$ such that $q=gr$. We denote by $\sim$ the minimal (by inclusion) closed in $X^*\times X^*$ equivalences on $X^*$ such that $\parallel\subseteq\sim$. The quotient $X^*/\sim$ is a compact Hausdorff space. It is called the corona of $X$ and is denoted by $\check{X}$.

For every $p\in X^*$, we denote by $\check{p}$ the class of the equivalence $\sim$ containing $p$, and say that $p, q\in X^*$ are corona equivalent if $\check{p} = \check{q}$. To detect whether two ultrafilters $p, q\in X^*$ are corona equivalent, we use $G$-slowly oscillating functions on $X$.

A function $h:X\to[0,1]$ is called {\it $G$-slowly oscillating} if, for any $\varepsilon>0$ and finite subset $K\subset G$, there exists a finite subset $F$ of $X$ such that
$$diam\; h(Kx)<\varepsilon,$$
for each $x\in X\setminus F$, where $diam\; h(Kx)=\sup\{|h(y)-h(z)|:y,z\in Kx\}$.

\begin{theorem}
Let $q, r\in X^*$. Then $\check{q} = \check{r}$ if and only if $h^\beta(r)=h^\beta(q)$ for every $G$-slowly oscillating function $h:X\to[0,1]$.
\end{theorem}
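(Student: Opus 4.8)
The plan is to reduce the whole theorem to one structural lemma characterizing $G$-slowly oscillating functions through their extensions: \emph{a bounded function $h:X\to[0,1]$ is $G$-slowly oscillating if and only if $h^\beta(gp)=h^\beta(p)$ for every $g\in G$ and every $p\in X^*$}, i.e. $h^\beta$ is constant on the parallel classes inside $X^*$. The computation underlying this lemma is that, since $p\mapsto gp:\beta X\to\beta X$ is the continuous extension of $x\mapsto gx$, uniqueness of continuous extensions gives $h^\beta(gp)=\lim_{x\to p}h(gx)$ for every $p\in\beta X$.

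For the forward implication of the lemma I would apply the slow-oscillation hypothesis with $K=\{e,g\}$: for every $\varepsilon>0$ the set $\{x:|h(gx)-h(x)|<\varepsilon\}$ is cofinite, hence belongs to every free $p$, so $\lim_{x\to p}(h(gx)-h(x))=0$ and therefore $h^\beta(gp)=h^\beta(p)$. For the converse I argue by contraposition. If $h$ is not $G$-slowly oscillating, then after reducing an arbitrary finite $K$ to single translations via the substitution $y=g'x$ (so that $|h(gx)-h(g'x)|=|h((gg'^{-1})y)-h(y)|$ and $g'$ being a bijection keeps the exceptional set infinite), there are $g\in G$ and $\varepsilon>0$ with $A=\{x:|h(gx)-h(x)|\ge\varepsilon\}$ infinite; choosing a free $p$ with $A\in p$ and passing to the limit yields $|h^\beta(gp)-h^\beta(p)|\ge\varepsilon$, contradicting constancy on parallel classes. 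I expect this converse to be the technical heart of the whole argument, since it is where a purely topological statement about $h^\beta$ must be converted back into the uniform, cofinite condition defining slow oscillation.

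Granting the lemma, the easy direction of the theorem is formal. Let $E=\bigcap_h\{(p,q)\in X^*\times X^*:h^\beta(p)=h^\beta(q)\}$, the intersection ranging over all $G$-slowly oscillating $h$. Each set in the intersection is a closed equivalence relation, being the preimage of the diagonal of $[0,1]^2$ under a continuous map, so $E$ is a closed equivalence relation; by the forward implication of the lemma it contains $\parallel$. Minimality of $\sim$ then forces $\sim\subseteq E$, which is precisely the assertion that $\check q=\check r$ implies $h^\beta(q)=h^\beta(r)$ for every $G$-slowly oscillating $h$.

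For the converse direction I would argue contrapositively: assume $\check q\neq\check r$. Since $\check X$ is compact Hausdorff, Urysohn's lemma supplies a continuous $\tilde h:\check X\to[0,1]$ separating $\check q$ and $\check r$; composing with the quotient map $X^*\to\check X$ gives a continuous, $\sim$-invariant (hence $\parallel$-invariant) function $\hat h:X^*\to[0,1]$ with $\hat h(q)\neq\hat h(r)$. As $X^*$ is closed in the normal space $\beta X$, Tietze's theorem extends $\hat h$ to a continuous $H:\beta X\to[0,1]$; putting $h=H|_X$ gives $h^\beta=H$ by uniqueness of the continuous extension, so $h^\beta|_{X^*}=\hat h$. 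Because $\hat h$ is $\parallel$-invariant, $h^\beta$ is constant on the parallel classes of $X^*$, so the lemma guarantees that $h$ is genuinely $G$-slowly oscillating, while $h^\beta(q)=\hat h(q)\neq\hat h(r)=h^\beta(r)$. This $h$ witnesses the contrapositive and completes the argument.
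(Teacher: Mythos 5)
The paper contains no proof of this theorem: it is stated bare, with the reader referred to \cite{b14} (and \cite{b1,b13,b17,b19}) for the theory of dynamical equivalences and coronas, so there is no in-paper argument to compare yours against. Judged on its own merits, your proof is correct, and it follows what is essentially the standard route in that cited literature. Your pivotal lemma --- $h$ is $G$-slowly oscillating if and only if $h^\beta$ is constant on the parallel classes inside $X^*$ --- is exactly the right reduction; given it, the implication $\check{q}=\check{r}\Rightarrow h^\beta(q)=h^\beta(r)$ follows from minimality of $\sim$, since $E=\bigcap_h\{(p,q):h^\beta(p)=h^\beta(q)\}$ is a closed equivalence relation containing $\parallel$, and the converse follows from Urysohn's lemma on the compact Hausdorff quotient $\check{X}$, Tietze extension from the closed subspace $X^*$ of the normal space $\beta X$, and uniqueness of continuous extensions to conclude $h^\beta=H$. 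Your appeal to compact Hausdorffness of $\check{X}$ is legitimate: the paper asserts it, and it is the standard fact that a quotient of a compact Hausdorff space by a closed equivalence relation is compact Hausdorff.

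Two steps that you gloss over should be made explicit, though both are routine. First, in the contrapositive of the lemma, the failure of slow oscillation gives an infinite set of $x$ with $\mathrm{diam}\, h(Kx)\geqslant\varepsilon$, and for each such $x$ some pair of elements of $K$ witnesses this; you need a pigeonhole argument over the finitely many pairs $(g,g')\in K\times K$ to fix a single pair for which $\{x: |h(gx)-h(g'x)|\geqslant \varepsilon/2\}$ is infinite, and only then does your substitution $y=g'x$ (using that $g'$ acts bijectively) reduce to a single translation $s=gg'^{-1}$. Second, the passage from $|h(sy)-h(y)|\geqslant\varepsilon/2$ on a member of $p$ to $|h^\beta(sp)-h^\beta(p)|\geqslant\varepsilon/2$ uses that the $p$-limit of a bounded real function exists and preserves bounds valid on a member of $p$; this is standard but is the precise point where the ultrafilter enters. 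With these two sentences added, the argument is complete.
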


For more detailed information on dynamical equivalences and topologies of coronas see \cite{b14} and \cite{b1,b13,b17,b19}.

In the next section, for a subset $A$ of $X$ and $p\in X^*$, we use the {\it corona $p$-companion} of $A$
$$A_{\check{p}} = A^* \cap \check{p}.$$

\section{Diversity of subsets of $G$-spaces}

For a set $S$, we use the standard notation $[S]^{<\w}$ for the family of all finite subsets of $S$.

Let X be a $G$-space, $x\in X, A\subseteq X, K\in [G]^{<\w}$. We set
$$B(x, K) = Kx\cup\{x\}, B(A,K)=\bigcup_{a\in A}B(a,K),$$
and say that $B(x,K)$ is a {\it ball of radius} $K$ around $x$. For motivation of this notation, see the section 7.

Our first portion of definitions concerns the upward directed properties: $A\in\PP$ and $A\subseteq B$ imply $B\in \PP$.

A subset $A$ of a $G$-space $X$ is called
\begin{itemize}
  \item {\it large} if there exists $K\in [G]^{<\w}$ such that $X=KA$;
  \item {\it thick} if, for every $K\in [G]^{<\w}$, there exists $a\in A$ such, that $Ka\subseteq A$;
  \item {\it prethick} if there exists $F\in [G]^{<\w}$ such that $FA$ is thick.
\end{itemize}

In the dynamical terminology \cite{b7}, large and prethick subsets are known as syndedic and piecewise syndedic subsets.

\begin{theorem} For a subset $A$ of an infinite $G$-space $X$, the following statements hold:
\begin{itemize}
  \item[(i)] $A$ is large if and only if $A_p \ne \varnothing$ for each $p\in X^*$;
  \item[(ii)] $A$ is thick if and only if, there exists $p\in X^*$ such that $A_p=Gp$.
\end{itemize}
\end{theorem}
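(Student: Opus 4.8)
The plan is to reduce both statements to bookkeeping about which translates of $A$ belong to $p$, via the elementary equivalence $A\in gp \iff g^{-1}A\in p$ (immediate from $gp=\{gP:P\in p\}$, since $A=g(g^{-1}A)$). Under this dictionary $A_p=\{gp: g^{-1}A\in p\}$, so $A_p\ne\varnothing$ means some translate $g^{-1}A$ lies in $p$, whereas $A_p=Gp$ (recall $A_p\subseteq Gp$ always) means $g^{-1}A\in p$ for \emph{every} $g\in G$. Thus (i) asks that every free ultrafilter catch at least one translate of $A$, while (ii) asks for a single free ultrafilter catching all of them.

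For (i) the forward direction is direct: if $X=KA$ with $K$ finite, then for any $p\in X^*$ the finite union $\bigcup_{k\in K}kA=X$ lies in $p$, so maximality of $p$ forces $kA\in p$ for some $k\in K$; equivalently $A\in k^{-1}p$, i.e. $k^{-1}p\in A_p$. For the converse I would argue contrapositively. Assuming $A$ is not large (and nonempty, the empty case being trivial), I want a free $p$ with $g^{-1}A\notin p$, i.e. $X\setminus g^{-1}A\in p$, for all $g$. The key identity is $\bigcap_{g\in K}(X\setminus g^{-1}A)=X\setminus K^{-1}A$, where $K^{-1}=\{k^{-1}:k\in K\}$, so the family $\{X\setminus g^{-1}A:g\in G\}$ together with the cofinite filter has the finite intersection property precisely when each $X\setminus K^{-1}A$ is infinite; extending such a filter to an ultrafilter then yields the desired $p\in X^*$ with $A_p=\varnothing$.

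Part (ii) is parallel. For its converse direction, $A_p=Gp$ gives $g^{-1}A\in p$ for every $g$, so for any finite $K$ the set $\bigcap_{g\in K}g^{-1}A=\{x:Kx\subseteq A\}$ lies in $p$ and is therefore nonempty; intersecting also with $A\in p$ (the case $g=e$) produces $a\in A$ with $Ka\subseteq A$, witnessing thickness. For the forward direction, thickness says each $\{x:Kx\subseteq A\}$ is nonempty, which is exactly the finite intersection property for $\{g^{-1}A:g\in G\}$; to land in $X^*$ I again need these sets to be infinite, after which any ultrafilter extending the filter they generate together with the cofinite filter satisfies $g^{-1}A\in p$ for all $g$, hence $A_p=Gp$.

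The step I expect to be the main obstacle, common to the converse of (i) and the forward direction of (ii), is upgrading nonemptiness of the relevant intersection to \emph{infiniteness}, which is what guarantees that the constructed ultrafilter is free. Here transitivity of the action is essential. If $X\setminus K^{-1}A$ were finite, I would cover those finitely many exceptional points by finitely many further translates of $A$ (using transitivity to move a fixed point of $A$ onto each of them), contradicting non-largeness. Dually, if $\{x:Kx\subseteq A\}$ were finite, then the nested nonempty sets $\{x:Lx\subseteq A\}$ over finite $L\supseteq K$ would share a common point $x^*$, forcing $Gx^*=X\subseteq A$ and hence $A=X$ (the case $A=X$ being handled directly, since then every translate equals $X$). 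These two transitivity arguments are the crux; the remaining manipulations are routine.
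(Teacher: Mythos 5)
Your proof is correct, and it diverges from the paper's in an interesting way, mainly in part (ii). For (i) the two arguments are dual forms of the same compactness fact: the paper assumes $A_p\neq\varnothing$ for every $p$, covers the compact space $X^*$ by the open sets $\overline{g_p^{-1}A}\cap X^*$, extracts a finite subcover, and uses transitivity to absorb the finite remainder $H$ into finitely many further translates of $A$; you run the contrapositive, showing that non-largeness gives the family $\{X\setminus g^{-1}A: g\in G\}$ together with the cofinite filter the finite intersection property and extending to a free $p$ with $A_p=\varnothing$. Your key step --- if $X\setminus K^{-1}A$ were finite, transitivity would cover the exceptional points by finitely many translates and contradict non-largeness --- is precisely the paper's mop-up step run in reverse, so here the content is essentially the same. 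The real difference is (ii): the paper settles it in one line by observing that $A$ is thick if and only if $X\setminus A$ is not large, and that $(X\setminus A)_p=\varnothing$ is the same as $A_p=Gp$, so (ii) is the formal complement-dual of (i); you instead prove (ii) from scratch with a second FIP construction, which obliges you to prove the extra lemma that a thick $A\neq X$ has each set $\{x: Kx\subseteq A\}$ infinite (your argument --- a downward-directed family of nonempty subsets of a finite set has a common point $x^*$, whence $Gx^*\subseteq A$ and transitivity forces $A=X$ --- is correct, as is your separate treatment of the cases $A=X$ and $A=\varnothing$). The paper's route buys brevity and makes the large/thick complementation duality explicit; yours is self-contained, makes the dictionary $A\in gp \Leftrightarrow g^{-1}A\in p$ and the reformulation $A_p=Gp \Leftrightarrow g^{-1}A\in p$ for all $g$ fully explicit, and yields the structural fact about thick sets as a byproduct.
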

\begin{proof}
$(i)$ We suppose that $A$ is large and choose $F\in [G]^{<\w}$ such that $X=FA$. Given any $p\in X^*$, we choose $g\in F$ such that $gA\in p$. Then $A\in g^{-1}p$ and $A_p \ne \varnothing$.

To prove the converse statement, for every $p\in X^*$, we choose $g_p\in G$ such that $A\in g_pp$ so $g_p^{-1}A\in p$. We consider an open covering of $X^*$ by the subsets $\{g_p^{-1}A^* : p \in X^*\}$ and choose its finite subcovering $g_{p_1}^{-1}A^*, \ldots, g_{p_n}^{-1}A^*$. Then the set $H=X\setminus (g_{p_1}^{-1}A^*\cup \ldots\cup g_{p_n}^{-1}A^*)$ is finite. We choose $F\in [G]^{<\w}$ such that $H\subset FA$ and $\{g_{p_1}^{-1}, \ldots, g_{p_n}^{-1}\} \subset F$. Then $X=FA$ so
$A$ is large.

$(ii)$ We note that $A$ is thick if and only if $X\setminus A$ is not large and apply $(i)$.
\end{proof}

\begin{theorem} 
A subset $A$ of an infinite $G$-space $X$ is prethick if and only if there exists $p\in X^*$ such that $A\in p$ and $(\beta G)p$ is a minimal invariant subsets of $\beta X$.
\end{theorem}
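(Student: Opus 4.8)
The plan is to reduce the statement to the two characterizations already in hand: Theorem 4.1(ii), which says that a set $T$ is thick exactly when some $p\in X^*$ satisfies $T_p=Gp$, and Theorem 2.1, which says that $(\beta G)p$ is minimal invariant exactly when for each $A\in p$ there is a finite $F$ with $G=FA_p$. The bridge between the two is the observation that, for a finite $F$, the covering condition $G=FA_p$ of Theorem 2.1 is precisely the condition $(FA)_p=Gp$, equivalently $FA\in gp$ for every $g\in G$; and this last condition is exactly what Theorem 4.1(ii) requires in order to recognize $FA$ as thick. So the whole proof amounts to transporting this one condition back and forth.

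For the converse (easier) direction, I would suppose $p\in X^*$ with $A\in p$ and $(\beta G)p$ minimal invariant. Applying Theorem 2.1 to the member $A\in p$ yields a finite $F\subseteq G$ with $G=FA_p$, i.e. $(FA)_p=Gp$. Since $p$ is free, Theorem 4.1(ii) then gives at once that $FA$ is thick, so $A$ is prethick by definition.

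For the forward direction, I would fix a finite $F$ with $FA$ thick and invoke Theorem 4.1(ii) to get $q\in X^*$ with $(FA)_q=Gq$. As $(FA)_q=\overline{FA}\cap Gq$, this means $Gq\subseteq\overline{FA}$, and since $\overline{FA}$ is closed and $(\beta G)q$ is the closure of $Gq$, I obtain the full orbit closure $(\beta G)q\subseteq\overline{FA}$. This invariant set contains a minimal invariant subset $M$, and $M\subseteq\overline{FA}$. Picking any $p'\in M$, from $FA=\bigcup_{f\in F}fA\in p'$ I get $fA\in p'$ for some $f\in F$, hence $A\in f^{-1}p'$. Setting $p=f^{-1}p'$ keeps the ultrafilter inside the invariant set $M$, so $(\beta G)p=M$ is minimal invariant while $A\in p$. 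That $p\in X^*$ follows because every minimal invariant subset of $\beta X$ avoids $X$: for $x\in X$ transitivity forces $Gx=X$, so $(\beta G)x=\mathrm{cl}(X)=\beta X$, which is not minimal since $X^*$ is a proper closed invariant subset.

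The one genuinely delicate point is this translation step in the forward direction: thickness of $FA$ only supplies an ultrafilter lying in $\overline{FA}$, and I must convert membership of $FA$ into membership of $A$ without leaving a minimal orbit. The resolution is to descend to a minimal $M\subseteq\overline{FA}$ \emph{first} and only then translate by the appropriate $f^{-1}$, using invariance of $M$ to guarantee that $p=f^{-1}p'$ still generates the same minimal orbit $M=(\beta G)p$. Reconciling the two readings of the companion (a subset of $G$ in Theorem 2.1 versus a subset of $Gp$ in Theorem 4.1) is then routine, since both encode the single condition that $FA\in gp$ for all $g\in G$.
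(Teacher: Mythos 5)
Your proof is correct and follows essentially the route the paper itself indicates: the paper's proof is just a citation of \cite[Theorem 4.40]{b7} with the instruction to substitute Theorem 2.1 for Theorem 4.39 of \cite{b7}, and your argument carries out exactly that adaptation --- the converse direction via Theorem 2.1 and the bridge $G=FA_p \Leftrightarrow (FA)_p=Gp \Leftrightarrow FA\in gp$ for all $g\in G$, the forward direction via Theorem 4.1(ii), descent to a minimal invariant subset of $\overline{FA}$, and translation by $f^{-1}$ inside it. Your handling of the two delicate points (reconciling the two readings of the companion $A_p$, and showing minimal invariant subsets avoid $X$ so that $p$ is free) is sound and supplies details the paper leaves to the reader.
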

\begin{proof}
The theorem was proved for regular $G$-spaces in \cite[Theorem 4.40]{b7}. This proof can be easily adopted to the general case if we use Theorem 2.1 in place of Theorem 4.39 from \cite{b7}.
\end{proof}
\begin{corollary}
For every finite partition of a $G$-space $X$, at least one cell of the partition is prethick.
\end{corollary}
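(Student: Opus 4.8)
The plan is to apply the prethick criterion (Theorem 4.2) to a single free ultrafilter lying in a minimal invariant subset, and let the ultrafilter's selection of a cell do the rest. First I would produce a free ultrafilter $p$ whose orbit closure $(\beta G)p$ is minimal invariant. The set $X^*$ of free ultrafilters is nonempty, since $X$ is infinite; it is closed in $\beta X$, because every point of $X$ is isolated and hence $X$ is open; and it is invariant, because each $g\in G$ acts on $X$ as a bijection and therefore sends free ultrafilters to free ultrafilters. By the discussion preceding Theorem 2.1, the invariant set $X^*$ contains a minimal invariant subset $M$, and for any $p\in M$ minimality gives $(\beta G)p=M$. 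Fixing such a $p$, I then have a free ultrafilter with $(\beta G)p$ minimal invariant.

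Next, write the partition as $X=A_1\cup\dots\cup A_n$ with the cells pairwise disjoint. Since $p$ is an ultrafilter and the cells cover $X$, the defining property that a union belongs to an ultrafilter only if one of its parts does, applied inductively, shows that at least one cell $A_i$ belongs to $p$. Now $A_i\in p$ and $(\beta G)p$ is minimal invariant, so Theorem 4.2 immediately yields that $A_i$ is prethick, which is exactly the assertion of the corollary.

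I expect the only point requiring genuine care to be the first step: ensuring that the minimal invariant subset can be taken inside $X^*$, so that the witnessing ultrafilter is \emph{free}, as Theorem 4.2 demands. This reduces to checking that $X^*$ is itself a closed invariant subset of $\beta X$, after which the general existence statement for minimal invariant subsets supplies $M\subseteq X^*$. Once a single $p$ in such an $M$ is in hand, the remainder is the routine ultrafilter observation that a finite partition forces one cell into $p$, followed by a direct appeal to the prethick characterization.
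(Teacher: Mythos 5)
Your proof is correct and follows essentially the same route as the paper: the corollary is meant as an immediate consequence of Theorem 4.2, obtained exactly as you do by taking $p\in X^*$ with $(\beta G)p$ minimal invariant (which exists since $X^*$ is a nonempty closed invariant subset of $\beta X$) and noting that one cell of the finite partition must belong to $p$ — indeed, the Remark following the corollary displays this very argument. Your explicit verification that the minimal invariant set can be located inside $X^*$, so that the witness is free, is a point the paper leaves tacit but handles identically.
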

\begin{remark}
For a subset $A$ of an infinite $G$-space $X$, we set 
$$\Delta(A)=\{g\in G: g^{-1}A\cap A \mbox{ is infinite}\}.$$
Let $\PP$ be a finite partition of $X$. We take $p\in X^*$ such that the set $(\beta G)p$ is minimal invariant and choose $A\in\PP$ such that $A\in p$. By Theorem 2.1, $A_p$ is large in $G$. If $g\in A_p$ then $g^{-1}A\in p$ and $A\in p$. Hence, $g^{-1}A\cap A$ is infinite, so $A_p\subseteq \Delta(A)$ and $\Delta(A)$ is large.
\end{remark}

In fact, this statement can be essentially strengthened: there is a function $f:\NN\to\NN$ such that, for every $n$-partition $\PP$ of a $G$-space $X$, there are $A\in \PP$ and $F\subset G$ such that $G=F\Delta(A)$ and $|F| \leqslant f(n)$. This is an old open problem (see the surveys \cite{b2,b22} whether the above statement is true with $f(n)=n$).

In the second part of the section, we consider the downward directed properties $A\in \PP$, $B\subseteq A$ imply $B\in P$) and present some results from \cite{b3,b23}
A subset $A$ of a $G$-space $X$ is called
\begin{itemize}
  \item {\it thin} if, for every $F\in [G]^{<\w}$, there exists $K\in [X]^{<\w}$, such that $B_A(a,F) = \{a\}$ for each $a\in A\setminus K$, where $B_A(a, F) = B(a,F)\cap A$;
  \item {\it sparse} if, for every infinite subset $Y$ of $X$, there exists $H\in [G]^{<\w}$ such that, for every $F\in [G]^{<\w}$, there is $y\in Y$ such that $B_A(y, F)\setminus B_A(y, H) = \varnothing$;
  \item {\it scattered} if, for every infinite subset $Y$ of $X$, there exists $H\in [G]^{<\w}$, such that, for every $F\in [G]^{<\w}$, there is $y\in Y$ such that $B_Y(a, F)\setminus B_Y(a, H) = \varnothing$.

\end{itemize}
\begin{theorem} 
For a subset $A$ of a $G$-space $X$, the following statements hold:
\begin{itemize}
  \item[(i)] $A$ is thin if and only if $|A_p| \leqslant 1$ for each $p\in X^*$;
  \item[(ii)] $A$ is sparse if and only if $A_p$ is finite for every $p\in X^*$;
\end{itemize}
\end{theorem}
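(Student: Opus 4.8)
The plan is to translate both equivalences into statements about the orbit $Gp$ and argue each by contraposition. Throughout, write $G_p=\{g\in G:gp=p\}$ for the stabilizer of $p\in X^*$ and $\mathrm{Fix}(g)=\{x\in X:gx=x\}$. Since $A\in gp$ is equivalent to $g^{-1}A\in p$, the companion is $A_p=\{gp:g\in G,\ g^{-1}A\in p\}$, and $gp=hp$ precisely when $h^{-1}g\in G_p$; hence $|A_p|$ counts the cosets $gG_p$ with $g^{-1}A\in p$. Part (i) asks that this count never exceed $1$, and part (ii) that it always be finite. In each case the delicate point, which does not arise for a regular (freely acting) $G$-space, is to force the translates $g_ip$ to be pairwise distinct rather than collapsing through a nontrivial stabilizer; I will deal with this by a disjointification argument.

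For (i), I would first record the elementary reformulation that $A$ is thin if and only if, for every $g\in G$, the set $S_g=\{a\in A:ga\in A,\ ga\ne a\}$ is finite: taking $F=\{g\}$ in the definition gives one direction, and a finite union of the $S_g$ recovers an arbitrary $F$. For the implication ``$|A_p|\le1$ for all $p\in X^*$ $\Rightarrow$ thin'' I argue contrapositively. If some $S_g$ is infinite, consider the graph on $S_g$ joining $a$ to $ga$ whenever both lie in $S_g$; it has degree at most $2$, so it is $3$-colourable and has an infinite colour class $S'$, which satisfies $gS'\cap S'=\varnothing$. Any free $q$ with $S'\in q$ then has $A\in q$ and $A\in gq$ (because $gS'\subseteq A$), while $S'\cap gS'=\varnothing$ forces $gS'\notin q$, so $q\ne gq$ and $|A_q|\ge2$. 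Conversely, if $|A_q|\ge2$ then, replacing $q$ by a translate, I may assume $A\in q$, $A\in kq$ and $q\ne kq$ for some $k$; the point is that $q\ne kq$ forces $\mathrm{Fix}(k)\notin q$ (if $\mathrm{Fix}(k)\in q$ then $k$ fixes a member of $q$ pointwise, whence $kq=q$), so $S_k=(A\cap k^{-1}A)\setminus\mathrm{Fix}(k)$ lies in $q$ and is infinite, i.e.\ $A$ is not thin.

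For (ii) I would run the same scheme with ``at most one'' replaced by ``finite''. From an infinite $A_p$, I choose $g_0,g_1,\dots$ with the $g_ip$ pairwise distinct and $g_i^{-1}A\in p$; thinning a member of $p$ to a sequence and reading off the translates turns the distinctness of the $g_ip$ into the required growth of the balls $B_A(y,F)$ around the chosen points $y$, so $A$ is not sparse. In the other direction, a witness $Y$ to non-sparseness supplies, for each finite $H$, a finite $F$ and for every $y\in Y$ an element of $F$ carrying $y$ into $A$ outside $B(y,H)$; running this over an increasing chain of $H$'s and passing to a limit ultrafilter along $Y$ produces a free $p$ together with infinitely many group elements $g_i$ with $g_i^{-1}A\in p$, and the ``outside $B(y,H)$'' clause is exactly what I would use to keep the $g_ip$ pairwise distinct, giving $A_p$ infinite.

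The hard part, in both parts, is this distinctness of the translate-ultrafilters in the presence of nontrivial point-stabilizers: this is where the genuine $G$-space case departs from the regular case $X=G$, in which the action on $X^*$ is free and no collapsing occurs. In (i) the issue is settled once and for all by the degree-$\le2$ colouring that yields a single separated pair, and the step I expect to require the most care in (ii) is upgrading that device so as to extract an infinite supply of separated translates, i.e.\ infinitely many cosets $g_iG_p$, rather than just two.
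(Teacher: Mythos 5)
You should note first that the paper itself contains no proof of this theorem: it is stated as one of the ``results from \cite{b3,b23}'' and imported without argument, so your proof can only be compared with the standard treatments there, not with anything in the text. Measured on its own merits, your part (i) is complete and correct. The reformulation of thinness as ``every $S_g=\{a\in A: ga\in A,\ ga\ne a\}$ is finite'' is right (singleton $F$ one way, finite unions the other); the degree-$\leqslant 2$ graph on $S_g$ with edges $\{a,ga\}$, its $3$-colouring, and the resulting infinite $S'$ with $gS'\cap S'=\varnothing$ correctly produce $q\ne gq$ with $q,gq\in A_q$; and in the converse the implication $\mathrm{Fix}(k)\in q\Rightarrow kq=q$ (hence $q\ne kq\Rightarrow \mathrm{Fix}(k)\notin q$) legitimately settles the stabilizer issue that distinguishes a general $G$-space from the regular case, making $S_k=(A\cap k^{-1}A)\setminus\mathrm{Fix}(k)$ an infinite member of $q$.

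Part (ii), by contrast, is left as a plan, and each direction has a concrete unfinished step. In the direction ``not sparse $\Rightarrow$ some $A_p$ infinite'' you never establish that your translates $g_ip$ are pairwise distinct; you explicitly flag this as the step needing the most care but do not carry it out. It does close, and by your own part-(i) device: set $E_0=\{e\}$, let $F_n$ witness non-sparseness of $A$ for $H=E_n$, put $E_{n+1}=E_n\cup F_n$, fix a free $p$ with $Y\in p$, and pigeonhole over the finite $F_n$ to get $f_n\in F_n$ and $Y_n\in p$ with $f_ny\in A\setminus(E_ny\cup\{y\})$ for all $y\in Y_n$; then $f_n^{-1}A\in p$, so $f_np\in A_p$, and for $m>n$ the permutation $h=f_n^{-1}f_m$ satisfies $\mathrm{Fix}(h)\cap Y_m=\varnothing$, hence $\mathrm{Fix}(h)\notin p$, and your degree-$\leqslant 2$ colouring applied to the graph with edges $\{y,hy\}$ on $X\setminus\mathrm{Fix}(h)$ yields a class $C\in p$ with $hC\cap C=\varnothing$, whence $f_mp\ne f_np$ (this is exactly the classical three-sets lemma, which your colouring trick reproves; pointwise disagreement on a member of $p$ alone would not suffice without it). In the other direction there is a quantifier you glossed over: refuting sparseness requires $B_A(y,F)\setminus B_A(y,H)\ne\varnothing$ for \emph{every} $y$ in the chosen infinite $Y$, whereas your separating sets $R_n\in p$ only control $y_n$ for indices $n$ beyond roughly $|H|$, and the first few points of $Y$ need not carry $|H|+2$ distinct translates into $A$. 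This is patched using transitivity plus the automatic infinitude of $A$ (note $A_p\ne\varnothing$ forces $A$ infinite): for each exceptional $y_n$ pick $a_n\in A\setminus B(y_n,H)$ and $h_n$ with $h_ny_n=a_n$, and enlarge $F$ by these finitely many $h_n$. With these two repairs your outline becomes a correct, self-contained proof; as submitted, part (ii) is a sketch whose hardest point is acknowledged but not executed.
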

Let $(g_n)_{n\in \w}$ be a sequence in $G$ and let $(x_n)_{n\in\w}$ be a sequence in $X$ such that
\begin{itemize}
  \item[(1)] $\{g_0^{\varepsilon_0} \ldots g_n^{\varepsilon_n}x_n : \varepsilon_i\in\{0,1\}\} \cap \{g_0^{\varepsilon_0} \ldots g_m^{\varepsilon_m}x_m : \varepsilon_i\in\{0,1\}\} = \varnothing $ for all distinct $m,n\in\w$;
  \item[(2)] $|\{g_0^{\varepsilon_0} \ldots g_n^{\varepsilon_n}x_n : \varepsilon_i\in\{0,1\}\}| = 2^{n+1}$ for every $n\in\w$.
\end{itemize}
We say that a subset $Y$ of $X$ is a {\it piecewise shifted $FP$-set} if there exist $(g_n)_{n\in \w}$, $(x_n)_{n\in\w}$ satisfying (1) and (2) such that
$$Y= \{g_0^{\varepsilon_0} \ldots g_n^{\varepsilon_n}x_n : \varepsilon_n\in\{0,1\}, n\in\w\}.$$
For definition of an $FP$-set in a group see \cite{b7}.
\begin{theorem} For a subset $A$ of a $G$-space $X$, the following statements are equivalent:
\begin{itemize}
  \item[(i)] $A$ is scattered;
  \item[(ii)] for every infinite subset $Y$ of $A$, there exists $p\in Y^*$ such that $Y_p$ is finite;
  \item[(iii)] $A_pp$ is discrete in $X^*$ for every $p\in X^*$;
  \item[(iv)] $A$ contains no piecewise shifted $FP$-sets.
\end{itemize}
\end{theorem}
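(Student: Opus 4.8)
The plan is to establish the cycle $(i)\Rightarrow(ii)\Rightarrow(iii)\Rightarrow(iv)\Rightarrow(i)$, translating each ball condition into a statement about the companions $A_{p}=\overline{A}\cap Gp$, in the spirit of the proofs of Theorems 4.1 and 4.3. The clauses $(ii)$ and $(iii)$ serve as an ultrafilter bridge between the two genuinely combinatorial clauses $(i)$ and $(iv)$, and the decisive work is concentrated in the single implication $(iv)\Rightarrow(i)$.

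For $(i)\Rightarrow(ii)$, I fix an infinite $Y\subseteq A$. Using scatteredness of $Y$ and of its infinite subsets, I select a parameter $H\in[G]^{<\w}$ for which the sets $Y_{F}=\{y\in Y: B_{Y}(y,F)=B_{Y}(y,H)\}$ are infinite for every $F\in[G]^{<\w}$ with $H\subseteq F$. Since these sets decrease with $F$, the family $\{Y_{F}\}$ together with the cofinite filter on $Y$ has the finite intersection property, so by compactness there is a free ultrafilter $p\in Y^{*}$ with $Y_{F}\in p$ for all such $F$. One then checks that every $gp\in Y_{p}$ equals $hp$ for some $h\in H\cup\{e\}$: on a set in $p$ the image $gy$ is forced into $B_{Y}(y,H)$, whence $gy=hy$ there, and this passes to the limit. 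Thus $|Y_{p}|\leqslant|H|+1$ is finite. The delicate point here is the very selection of an $H$ making all $Y_{F}$ infinite, which is where the scattered condition must be invoked not only for $Y$ but for its infinite subsets.

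The implications $(ii)\Rightarrow(iii)$ and $(iii)\Rightarrow(iv)$ I prove by contraposition, both through the same topological observation: a point $g_{0}p\in A_{p}$ is non-isolated precisely when there is a net of distinct companions $g_{\alpha}p\to g_{0}p$ with $A\in g_{\alpha}p$, and any such net can be ``pulled back'' to an infinite $Y\subseteq A$ all of whose companions $Y_{q}$ are infinite. For $(ii)\Rightarrow(iii)$ this directly contradicts $(ii)$. For $(iii)\Rightarrow(iv)$, I start instead from a piecewise shifted $FP$-set $Y\subseteq A$: its defining products $g_{0}^{\varepsilon_{0}}\cdots g_{n}^{\varepsilon_{n}}x_{n}$ let me build an ultrafilter $p$ along the tree of partial products for which the generators $g_{n}$ yield infinitely many distinct companions accumulating at a single point, so that some $A_{p}$ fails to be discrete.

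The heart of the theorem, and the step I expect to be the main obstacle, is $(iv)\Rightarrow(i)$, again by contraposition: from a witness $Y\subseteq A$ to non-scatteredness I must manufacture a piecewise shifted $FP$-set inside $A$. The negation of $(i)$ provides, for every $H\in[G]^{<\w}$, a scale $F\in[G]^{<\w}$ such that each $y\in Y$ has a $Y$-neighbour in $B_{Y}(y,F)\setminus B_{Y}(y,H)$; I exploit this uniform and unbounded growth of balls to choose the sequences $(g_{n})$ and $(x_{n})$ recursively, at stage $n$ letting $H$ absorb all products already produced and using the resulting far scale $F$ to pick $g_{n}$ and $x_{n}$ whose shift escapes every earlier level. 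The delicate part is securing, at one and the same time, the full doubling condition $(2)$, that is injectivity of the $2^{n+1}$ products at level $n$, together with the cross-level disjointness $(1)$; reconciling these two demands while controlling only a single new scale per step is exactly where the construction becomes technical, and it is the crux on which the whole equivalence rests.
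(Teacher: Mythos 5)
You should first note that the paper itself states this theorem without proof, quoting it from \cite{b3,b23}; so your attempt can only be judged against the arguments in those sources, and against correctness. Your cycle $(i)\Rightarrow(ii)\Rightarrow(iii)\Rightarrow(iv)\Rightarrow(i)$ has the right shape, and your $(i)\Rightarrow(ii)$ core — an ultrafilter $p$ containing all the sets $Y_F$ together with the cofinite filter, then trading each $g$ with $Y\in gp$ for some $h\in H\cup\{e\}$ with $gp=hp$, giving $|Y_p|\leqslant |H|+1$ — is sound modulo the selection of $H$, which you flag but do not carry out (note that passing to an infinite subset $Y'\subseteq Y$ changes the relative balls $B_{Y'}$ versus $B_Y$ and the parameter $H$ supplied by scatteredness, so a genuine diagonalization/inheritance argument is needed there, not just the remark that scatteredness holds for subsets). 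The more serious problem is $(ii)\Rightarrow(iii)$: the ``pull back a net'' step is not available. No point of $X^*$ is the limit of a nontrivial sequence, so a non-isolated point of $A_p$ cannot be converted into a countable set by topological extraction; and the conclusion you need, $\neg(ii)$, is universally quantified — an infinite $Y\subseteq A$ with $Y_q$ infinite for \emph{every} $q\in Y^*$ — which, by the very theorem being proved, is tantamount to producing a piecewise shifted $FP$-set inside $A$. So this step, which you treat as routine, secretly contains the whole difficulty you claim is concentrated in $(iv)\Rightarrow(i)$.

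Your $(iii)\Rightarrow(iv)$ sketch also fails for the natural choices of $p$. If $p$ is a free ultrafilter on the basepoints $\{x_n:n\in\w\}$ and, for finite $S\subseteq\w$, $w_S$ denotes the product $g_0^{\varepsilon_0}\cdots g_n^{\varepsilon_n}$ with $\varepsilon_i=1$ exactly for $i\in S$, then the companions $w_Sp$ are indeed pairwise distinct (by condition (2)), but each is \emph{isolated} among them: the neighbourhood $\overline{w_S\{x_m : m\geqslant \max S\}}$ of $w_Sp$ contains $w_Tp$ only for $T=S$, by conditions (1) and (2); the same computation kills $p$ taken along a single branch $g_0^{\delta_0}\cdots g_n^{\delta_n}x_n$. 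The accumulation points of this countable companion set lie outside $Gp$, and discreteness of $A_p$ concerns isolation within $A_p$ only, so ``infinitely many distinct companions'' proves nothing. To make a companion genuinely non-isolated one needs a limit-type ultrafilter, e.g.\ $p=\lim_S w_Sq$ taken along a suitable (union/idempotent-style) ultrafilter on the finite subsets of $\w$, so that each $w_Tp$ lies in every neighbourhood of $p$ — Hindman-type machinery that is absent from your sketch. Finally, in $(iv)\Rightarrow(i)$ you yourself name the crux and defer it: $\neg(i)$ supplies each $y$ with \emph{one} neighbour in $B_Y(y,F)\setminus B_Y(y,H)$, whereas condition (2) requires the new basepoint $x_n$ to carry $2^{n+1}$ prescribed products inside $Y$; bridging that gap needs a pigeonhole stabilization across infinitely many points together with a lemma that non-scatteredness passes to appropriately chosen infinite subsets. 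As it stands, the proposal is a plausible roadmap whose three decisive steps are asserted rather than proved.
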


\begin{theorem} Let $G$ be a countable group and let $X$ be a $G$-space. For a subset $A$ of $X$, the following statements hold:
\begin{itemize}
  \item[(i)] $A$ is large if and only if $A_{\check{p}} \ne \varnothing$ for each $p\in X^*$;
  \item[(ii)] $A$ is thick if and only if $\check{p} \subseteq A^*$ for some $p\in X^*$;
  \item[(iii)] $A$ is thin if and only if $|A_{\check{p}}| \leqslant 1$ for each $p\in X^*$;
  \item[(iv)] if $A_{\check{p}}$ is finite for each $p\in X^*$ then $A$ is sparse;
  \item[(v)] if, for every infinite subset $Y$ of $A$, there is $p\in Y^*$ such that $Y_{\check{p}}$ is finite then $A$ is scattered.
\end{itemize}
\end{theorem}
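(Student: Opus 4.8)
The plan is to reduce every item to the corresponding $p$-companion characterization already established in Theorems 4.1, 4.3 and 4.4, using the single elementary inclusion $A_p=A^*\cap Gp\subseteq A^*\cap\check p=A_{\check p}$, valid because $p\in X^*$ is free, so $Gp\subseteq\check p\subseteq X^*$. From this inclusion the implications (iv) and (v) are immediate: if $A_{\check p}$ is finite for every $p$ then so is $A_p$, and $A$ is sparse by Theorem 4.3(ii); and if every infinite $Y\subseteq A$ admits $p\in Y^*$ with $Y_{\check p}$ finite, then $Y_p\subseteq Y_{\check p}$ is finite too, so $A$ is scattered by the equivalence (i)$\Leftrightarrow$(ii) of Theorem 4.4. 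The same inclusion gives the forward half of (i) (via Theorem 4.1(i)) and the backward half of (iii) (via Theorem 4.3(i)). The backward half of (ii) is equally formal: $\check p\subseteq A^*$ implies $Gp\subseteq A^*$, i.e. $A_p=Gp$, so $A$ is thick by Theorem 4.1(ii). Finally, I would reduce the backward half of (i) to the forward half of (ii): if $A$ is not large then $X\setminus A$ is thick, so (ii) produces $p$ with $\check p\subseteq(X\setminus A)^*$, whence $A_{\check p}=A^*\cap\check p=\varnothing$.

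What remains is genuine content only in the forward half of (ii) and the forward half of (iii), and both I would extract from Theorem 3.1, which identifies $q\sim p$ with the condition $h^\beta(q)=h^\beta(p)$ for every $G$-slowly oscillating $h$. For (ii), enumerate $G=\{g_0,g_1,\dots\}$, put $K_n=\{g_0,\dots,g_n\}$, and use thickness to pick $a_n\in A$ with $K_na_n\subseteq A$; since $X$ is infinite, I would arrange the balls $K_na_n$ to be pairwise disjoint and to escape every finite subset of $X$. Let $p$ be a free ultrafilter containing each tail $\{a_n:n\ge m\}$. Define $h$ to equal $1$ off $\bigcup_nK_na_n$ and, on the $n$-th ball, to descend from $1$ to $0$ at the center $a_n$ in proportion to the word length of $g$ in the point $ga_n$. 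Then $\{h<\varepsilon\}\subseteq A$ and $h^\beta(p)=0$, so any $q\sim p$ has $h^\beta(q)=0$ by Theorem 3.1, forcing $A\in q$; hence $\check p\subseteq A^*$.

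For (iii) I would argue contrapositively: assuming $A$ thin and $q_1\neq q_2$ both containing $A$, choose $C\in q_1$ with $C\subseteq A$ and $A\setminus C\in q_2$. Thinness means that for each $K\in[G]^{<\w}$ all but finitely many points of $A$ are $K$-isolated in $A$, so the separation between distinct points of $A$ grows without bound; this room lets me extend $\chi_C$ from $A$ to a $G$-slowly oscillating $h$ on $X$ by interpolating off $A$ with ramps that are long on the scale of $K$. Such an $h$ satisfies $h^\beta=1$ on $C^*$ and $h^\beta=0$ on $(A\setminus C)^*$, so $h^\beta(q_1)\neq h^\beta(q_2)$ and $q_1\not\sim q_2$ by Theorem 3.1; thus no $\check p$ contains two points of $A^*$ and $|A_{\check p}|\le 1$ for every $p$.

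The main obstacle is verifying that these two functions really are $G$-slowly oscillating, and this is precisely where the countability of $G$ enters. In (ii) the exhaustion $K_n\nearrow G$ forces a fixed shift to change $h$ by $O(1/n)$ on the $n$-th ball, but I must also control the oscillation at the boundaries of the balls, which is the role of the disjoint, escaping placement. In (iii) the delicate point is the off-$A$ interpolation: each value $\chi_C(a)$ has to be spread over a neighborhood of $a$ whose scale grows with $a$, simultaneously for all radii $K$, so that the finitely many points lying $K$-close to two distinct points of $A$ (finite, by thinness applied to $K^{-1}K$) cannot produce large oscillation. Granting these two verifications, the theorem follows by assembling them with the reductions above.
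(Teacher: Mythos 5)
Your reductions are sound, and it is worth noting at the outset that the paper itself offers no proof of this theorem: it is stated among results imported from \cite{b3} and \cite{b23}, so the comparison must be against the standard corona arguments of that literature, which your outline follows. The formal half of your work is exactly right: the inclusion $A_p=\overline{A}\cap Gp\subseteq A^*\cap\check{p}=A_{\check{p}}$ disposes of (iv), (v), the forward half of (i) and the backward halves of (ii) and (iii), and your reduction of the backward half of (i) to the forward half of (ii) via ``not large $\Rightarrow$ complement thick'' is the intended duality. It is also a virtue of your scheme that both substantive constructions invoke only the easy direction of Theorem 3.1 ($\check{q}=\check{r}\Rightarrow h^\beta(q)=h^\beta(r)$), which is immediate from continuity of $h^\beta$ and its invariance on parallel ultrafilters.

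Three points in your deferred verifications need actual arguments. First, in (ii) you assert that the balls $K_na_n$ can be placed pairwise disjoint and escaping to infinity; this requires the center sets $C_n=\{a\in X: K_na\subseteq A\}$ to be infinite, which is not obvious. It is true by a small compactness argument: with the $K_n$ increasing, the $C_n$ form a decreasing chain of nonempty sets, so if some $C_n$ were finite there would be $a\in\bigcap_m C_m$, whence $Ga\subseteq A$ and $A=X$ by transitivity (a trivial case); otherwise every $C_n$ is infinite and your placement is available. Second, ``word length'' does not exist in an abstract countable group; the correct substitute is to choose the exhaustion so that $K_{n+1}\supseteq K_nK_n\cup K_n^{-1}$ and $e\in K_0$, and to set $h(x)=\min\{1,\ (\min\{j: x\in K_ja_n\})/n\}$ on the $n$-th ball and $h=1$ elsewhere; then a shift by $s\in K_m$ changes the least index by at most $\max(m,j)+1$, which gives your $O(1/n)$ estimate, the same inclusion shows a shift cannot jump between two deep points of distinct balls (disjointness would be violated), and the finitely many small balls are absorbed into the exceptional finite set $F$. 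Third, in (iii) the extension of $\chi_C$ should not be improvised: it is precisely the normality of the ballean $\B(G,X,[G]^{<\w})$, which is metrizable because $G$ is countable (the metrizability criterion quoted in Section 7), combined with the observation that thinness of $A$ makes $C$ and $A\setminus C$ asymptotically disjoint; Protasov's theorem that every metrizable ballean is normal then yields a slowly oscillating $h$ equal to $1$ on $C$ and $0$ on $A\setminus C$ outside a finite set, which suffices since $q_1,q_2$ are free. With these three repairs your proof is complete and, as far as I can tell, coincides with the route taken in the sources the paper cites.
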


\begin{question}
Does the conversion of Theorem 4.5 $(iv)$ hold?
\end{question}

\begin{question}
Does the conversion of Theorem 4.5 $(v)$ hold?
\end{question}

\begin{remark}
If $G$ is an uncountable Abelian group then the corona $\check{G}$ is a singleton \cite{b13}. Thus, Theorem 4.5 does not hold (with $X=G$) for uncountable Abelian groups.
\end{remark}

\section{Selective and Ramsey ultrafilters on $G$-spaces}

We recall (see \cite{b4}) that a free ultrafilter $\UU$ on an infinite set $X$ is said to be {\it selective} if, for any partition $\PP$ of $X$, either one cell of $\PP$ is a member of $\UU$, or some member of $\UU$ meets each cell of $\PP$ in at most one point. Selective ultrafilters on $\omega$ are also known under the name {\it Ramsey ultrafilters} because $\UU$ is selective if and only if, for each colorings $\chi:[\omega]^2 \to \{0,1\}$ of $2$-element subsets of $\omega$, there exists $U\in \UU$ such that the restriction $\chi|_{[U]^2}\equiv const$.

Let $G$ be a group, $X$ be a $G$-space with the action $G\times X\to X$, $(g,x) \mapsto gx$. All $G$-spaces under consideration are supposed to be transitive: for any $x,y \in X$, there exists $g\in G$ such that $gx = y$. If $G=X$ and $gx$ is the product of $g$ and $x$ in $G$, $X$ is called a {\it regular $G$-space}. A partition $\PP$ of a $G$-space $X$ is {\it $G$-invariant} if $gP\in \PP$ for all $g\in G$, $P\in \PP$.

Let $X$ be an infinite $G$-space. We say that a free ultrafilter $\UU$ on $X$ is {\it $G$-selective} if, for any $G$-invariant partition $\PP$ of $X$, either some cell of $\PP$ is a member of $\UU$, or there exists $U\in \UU$ such that $|P\cap U|\leqslant 1$ for each $P\in\PP$. 

Clearly, each selective ultrafilter on $X$ is $G$-selective. Selective ultrafilters on $\omega$ exist under some additional to ZFC set-theoretical assumptions (say, CH), but there are models of ZFC with no selective ultrafilters on $\omega$.

Let $X$ be a $G$-space, $x_0\in X$. We put $St(x_0) = \{g\in G: gx_0 = x_0\}$ and identify $X$ with the left coset space $G/St(x_0)$. If $\PP$ is a $G$-invariant partition of $X = G/S, S = St(x_0)$, we take $P_0\in \PP$ such that $x_0\in P_0$, put $H = \{g\in G: gS \in P_0\}$ and note that the subgroup $H$ completely determines $\PP$: $xS$ and $yS$ are in the same cell of $\PP$ if and only if $y^{-1}x\in H$. Thus, $\PP = \{x(H/S): x\in L\}$ where $L$ is a set of representatives of the left cosets of $G$ by $H$.

\begin{theorem}
For every infinite $G$-space $X$, there exists a $G$-selective ultrafilter $\UU$ on $X$ in ZFC.
\end{theorem}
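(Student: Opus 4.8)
The plan is to exploit the reduction, established just above, of $G$-invariant partitions to subgroups: writing $X=G/S$ with $S=St(x_0)$, a $G$-invariant partition is $\PP_H$ for a subgroup $S\subseteq H\subseteq G$, whose cells are the images in $X$ of the left cosets $gH$. First I would dispose of the trivial subgroups. If $[G:H]<\w$ then $\PP_H$ is finite, so some cell belongs to \emph{any} ultrafilter and the first alternative in the definition of $G$-selectivity holds automatically; the cases $H=S$ (cells are singletons) and $H=G$ are immediate. Hence it suffices to produce a free ultrafilter $\UU$ that, for every $H$ with $S\subseteq H\subsetneq G$ of infinite index, contains either a coset $gHx_0$ or a set $U$ meeting each such coset in at most one point, which I will call a $\PP_H$-transversal.

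I would build $\UU$ by transfinite recursion. Well-order the infinite-index subgroups as $\{H_\xi:\xi<\lambda\}$ and construct an increasing chain of free filters $\F_\xi$ so that $\F_{\xi+1}$ contains a coset or a $\PP_{H_\xi}$-transversal. Limit stages are harmless: the union of a chain of free filters is again a free filter, since both the finite-intersection property and the absence of finite sets pass to unions, and every partition decided at an earlier stage stays decided because the witnessing coset or transversal is retained. At the end I extend $\bigcup_\xi\F_\xi$ to an arbitrary free ultrafilter $\UU$; by construction each $\PP_{H_\xi}$, and hence every $G$-invariant partition, is decided in the $G$-selective way.

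All the content is in the successor step: given a free filter $\F$ and an infinite-index $H$, adjoin a coset or a transversal while keeping the filter free, i.e.\ find such a set meeting every member of $\F$ in an infinite set. I would split into two cases. If some $B\in\F$ meets only finitely many cosets of $H$, then $B$ is a finite disjoint union of the traces $B\cap(gHx_0)$; a pigeonhole argument (were each trace disjoint in an infinite set from some member of $\F$, a finite intersection of those members would meet $B$ finitely, contradicting freeness) shows that one of these cosets meets every member of $\F$ infinitely, and I adjoin it. In the opposite case every member of $\F$ meets infinitely many cosets, and here the engine is the theorem of B.\,H.~Neumann that $G$ is never covered by finitely many cosets of infinite-index subgroups: inside any member of $\F$ a partial transversal can always be enlarged, the finitely many cosets already used failing to cover. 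Using this I would thread a single transversal through $\F$ so as to meet every member infinitely.

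I expect this last construction to be the main obstacle, and exactly the place where the coset structure must be used. For an arbitrary filter and an arbitrary partition the analogous step is precisely what forces set-theoretic selective (Ramsey) ultrafilters to require hypotheses beyond ZFC, since one cannot in general diagonalize a single transversal against a filter of uncountable character. What should rescue the present situation is that the partitions are far from arbitrary: the infinite-index subgroups containing $S$ are closed under finite intersection, so the partitions $\PP_H$ are directed under refinement, and Neumann's theorem supplies uniform room at every finite stage. I would therefore organize the recursion to respect this directedness, carrying along a coset of the intersection of the subgroups treated so far and passing to the transversal alternative only when no coset of that intersection remains positive, so that at each step the transversal to be produced need only be positive against a correspondingly controlled portion of the filter; verifying that this bookkeeping can be maintained throughout is, I expect, the crux that keeps the whole construction inside ZFC.
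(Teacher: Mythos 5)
You locate the crux accurately, but you do not resolve it, and as written the construction fails exactly there. Your case~(a) pigeonhole is fine, and limits are harmless, but in case~(b) you must thread a \emph{single} transversal through every member of the filter built so far, and at later stages of your recursion that filter has uncountable character: Neumann's covering theorem only lets you extend a \emph{finite} partial transversal so as to meet any finitely many members of $\F$, so it gives positivity against each finite subfamily, never against all of $\F$ simultaneously. Your proposed repair --- carrying a coset of the intersection of the subgroups treated so far and deferring the transversal alternative --- does not survive the recursion: an intersection of infinitely many infinite-index subgroups containing $S$ can collapse to $S$ itself or to a subgroup $T$ with $|T:S|$ finite, so the carried coset becomes finite and the filter stops being free; and once a transversal for some $H_\xi$ has been adjoined, the positivity of every later coset or transversal must be checked against that transversal as well, which your subgroup-intersection invariant does not see. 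So the proposal is a sketch whose acknowledged crux is genuinely open, and I see no way to complete it along these lines.

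The paper's proof avoids the diagonalization entirely, by a different mechanism that your two-case split misses. By Zorn's lemma, fix a filter $\F$ on $G/S$ \emph{maximal} among filters having a base of sets $A/S$ with $A$ a subgroup, $S\subset A$, $|A:S|=\infty$ (such filters exist: take $A=G$), and let $\UU$ be any free ultrafilter extending $\F$ (possible since every member of $\F$ is infinite). Given a subgroup $H\supseteq S$, there are two cases, but not yours. If $|H\cap A:S|=\infty$ for every $A$ with $A/S\in\F$, then adjoining all the sets $(H\cap A)/S$ produces a filter of the same form, so maximality forces $H/S\in\F\subseteq\UU$: a cell of $\PP_H$ lies in $\UU$. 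Otherwise some $A/S\in\F$ has $|H\cap A:S|=n<\w$; then every nonempty trace $gH\cap A$ equals a single left coset $a(H\cap A)$, so the partition $\PP_H$ restricted to $A/S$ has all cells of size at most $n$. Splitting $A/S$ into $n$ partial transversals $U_1,\ldots,U_n$ (the $i$-th points of the cells), some $U_i$ belongs to $\UU$ because $A/S\in\UU$, and this $U_i$ meets every cell of $\PP_H$ in at most one point. This is the idea you are missing: when the coset alternative fails, it fails for a \emph{uniformly finite} reason inside a set already in the filter, so the transversal comes for free from a finite partition and the ultrafilter property --- no diagonalization against the filter ever occurs, and that is precisely what keeps the construction inside ZFC.
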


\begin{proof}
We take $x_0\in X$, put $S = St(x_0)$ and identify $X$ with $G/S$. We choose a maximal filter $\mathcal{F}$ on $G/S$ having a base consisting of subsets of the form $A/S$ where $A$ is a subgroup of $G$ such that $S\subset A$ and $|A:S|=\infty$. Then we take an arbitrary ultrafilter $\UU$ on $G/S$ such that $\mathcal{F}\subseteq \UU$.

To show that $\UU$ is $G$-selective, we take an arbitrary subgroup $H$ of $G$ such that $S\subseteq H$ and consider a partition $\PP_H$ of $G/S$ determined by $H$.

If $|H\cap A:S|=\infty$ for each subgroup $A$ of $G$ such that $A/S\in \mathcal{F}$ then, by the maximality of $\mathcal{F}$, we have $H/S\in\mathcal{F}$. Hence, $H/S\in\UU$.

Otherwise, there exists a subgroup $A$ of $G$ such that $A/S\in\mathcal{F}$ and $|H\cap A:S|$ is finite, $|H\cap A:S|=n$. We take an arbitrary $g\in G$ and denote $T_g = gH\cap A$. If $a\in T_g$ then $a^{-1}T_g\subseteq A$ and $a^{-1}T_g \subseteq H$. Hence, $a^{-1}T_g/S \subseteq A\cap H/S$ so $|T_g/S|\leqslant n$. If $x$ and $y$ determine the same coset by $H$, then they determine the same set $T_g$. Then we choose $U\in \UU$ such that $|U\cap x(H\cap A/S)|\leqslant 1$ for each $x\in G$. Thus, $|U\cap P|\leqslant 1$ for each cell $P$ of the partition $\PP_H$.
\end{proof}

The next theorem characterizes all $G$-spaces $X$ such that each free ultrafilter on $X$ is $G$-selective.

\begin{theorem}
Let $G$ be a group, $S$ be a subgroup of $G$ such that $|G:S|=\infty$, $X = G/S$. Each free ultrafilter on $X$ is $G$-selective if and only if, for each subgroup $T$ of $G$ such that $S\subset T \subset G$, either $|T:S|$ is finite or $|G:T|$ is finite.
\end{theorem}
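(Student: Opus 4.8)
The plan is to run everything through the dictionary, already set up just before the statement, between $G$-invariant partitions $\PP$ of $X=G/S$ and intermediate subgroups $H$ with $S\subseteq H\subseteq G$: the cells of $\PP_H$ are the sets $x(H/S)$, so each cell has cardinality $|H:S|$ and the number of cells equals $|G:H|$. Under this correspondence the two defining alternatives of $G$-selectivity become statements about these two cardinals, and the theorem reduces to an analysis of when both can be infinite.

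For the sufficiency (``if'') direction I would fix an arbitrary free ultrafilter $\UU$ and an arbitrary $G$-invariant partition $\PP_H$, and split into the two cases permitted by the hypothesis. If $|G:H|$ is finite, then $\PP_H$ is a finite partition of $X$, so by the basic defining property of ultrafilters one cell lies in $\UU$, and the first alternative of $G$-selectivity holds. If instead $|H:S|=n$ is finite, then every cell is an $n$-element set; enumerating the points of each cell as $1,\dots,n$ yields a coloring $c\colon X\to\{1,\dots,n\}$, and whichever color class lies in $\UU$ meets each cell in at most one point, giving the second alternative. Since $\PP_H$ was arbitrary, $\UU$ is $G$-selective, and since $\UU$ was arbitrary, every free ultrafilter is $G$-selective.

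For the necessity (``only if'') direction I would argue contrapositively: assuming an intermediate subgroup $T$ with $S\subset T\subset G$ for which both $|T:S|$ and $|G:T|$ are infinite, I would construct a free ultrafilter failing $G$-selectivity on the partition $\PP_T$. Call a subset of $X$ \emph{thin} if it meets each cell of $\PP_T$ in at most one point; these are exactly the sets that could witness the second alternative. I would then take the family of all complements $X\setminus Z$ of thin sets $Z$ together with all complements $X\setminus P$ of cells $P\in\PP_T$, and extend it to an ultrafilter $\UU$. By construction $\UU$ contains no cell and no thin set, and since singletons are thin, $\UU$ is free; hence for $\PP_T$ neither alternative of $G$-selectivity can hold.

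The crux is verifying that this family has the finite intersection property, and this is precisely where both infinitude hypotheses are used. A typical finite intersection has the form $X\setminus\bigl((Z_1\cup\dots\cup Z_m)\cup(P_1\cup\dots\cup P_k)\bigr)$; since each $Z_i$ meets a given cell in at most one point and each cell is infinite because $|T:S|$ is infinite, every cell still retains infinitely many points after $Z_1\cup\dots\cup Z_m$ is removed, while deleting the finitely many whole cells $P_1,\dots,P_k$ leaves infinitely many cells untouched because $|G:T|$ is infinite. Any point of a surviving cell lies in the intersection, so it is nonempty. I expect this finite-intersection check, together with the observation that ``thin set'' is exactly the obstruction to the second alternative, to be the main points requiring care; the remainder is routine bookkeeping through the subgroup–partition correspondence.
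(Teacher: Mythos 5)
Your proof is correct. Note that the paper itself gives no proof of this theorem --- it is one of the results deferred to the reference [9] (Petrenko--Protasov, \emph{Selective and Ramsey ultrafilters on $G$-spaces}) --- so there is no in-paper argument to compare against; your write-up in effect supplies the missing proof, and it correctly leans on the only ingredient the paper does provide, namely the bijection between $G$-invariant partitions of $G/S$ and intermediate subgroups $S\subseteq H\subseteq G$ with cells $x(H/S)$ of size $|H:S|$ and $|G:H|$ cells in total. Both directions check out: the case split in the sufficiency direction (finite partition versus an $n$-coloring of $X$ transversal to the cells) is sound, and in the necessity direction your finite-intersection-property verification is exactly right --- infinitude of $|T:S|$ keeps each cell infinite after deleting finitely many partial transversals, and infinitude of $|G:T|$ guarantees a cell survives the deletion of finitely many whole cells, while complements of singletons (singletons being transversal) force the resulting ultrafilter to be free. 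One cosmetic caution: your local use of ``thin'' collides with the paper's notion of a thin subset of a $G$-space from Section 4 (the two notions are related but not identical, since the paper's thinness quantifies over all finite $K\subseteq G$ rather than a single partition); since you define your term explicitly this causes no logical problem, but renaming it, say ``$\PP_T$-transversal,'' would avoid confusion.
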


Applying Theorem 2, we conclude that each free ultrafilter on an infinite Abelian group $G$ (as a regular $G$-space) is selective if and only if $G=
\mathbb{Z}\oplus F$ or $G = \mathbb{Z}_{p^\infty}\times F$, where $F$ is finite, $\mathbb{Z}_{p^\infty}$ is the Pr\"uffer $p$-group. In particular, each free ultrafilter on $\mathbb{Z}$ is $\mathbb{Z}$-selective.

For a $G$-space $X$ and $n\geqslant 2$, a coloring $\chi: [X]^n \to \{0,1\}$ is said to be {\it $G$-invariant} if, for any $\{x_1, \ldots, x_n\}\in [X]^n$ and $g\in G$, $\chi(\{x_1, \ldots, x_n\}) = \chi(\{gx_1, \ldots, gx_n\})$. We say that a free ultrafilter $\UU$ on $X$ is {\it $(G,n)$-Ramsey} if, for every $G$-invariant coloring $\chi:[X]^n\to \{0,1\}$, there exists $U\in \UU$ such that $\chi|_{[U]^n}\equiv const$. In the case $n=2$, we write "$G$-Ramsey" instead of $(G,2)$-Ramsey.

\begin{theorem}
For any $G$-space $X$, each $G$-Ramsey ultrafilter on $X$ is $G$-selective.
\end{theorem}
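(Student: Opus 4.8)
The plan is to adapt to the $G$-invariant setting the classical argument showing that a Ramsey ultrafilter on $\w$ is selective. Given a $G$-Ramsey ultrafilter $\UU$ on $X$ and an arbitrary $G$-invariant partition $\PP$ of $X$, I would encode the partition as a $2$-coloring of pairs and then feed this coloring into the Ramsey property of $\UU$.

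Concretely, I would define $\chi : [X]^2 \to \{0,1\}$ by setting $\chi(\{x,y\}) = 0$ when $x$ and $y$ lie in the same cell of $\PP$, and $\chi(\{x,y\}) = 1$ otherwise. The first step is to verify that $\chi$ is $G$-invariant, and this is exactly where the hypothesis that $\PP$ is $G$-invariant enters: for each $g\in G$ the map $x\mapsto gx$ is a bijection of $X$ that permutes the cells of $\PP$ (since $gP\in\PP$ for every $P\in\PP$), so $x,y$ share a cell if and only if $gx,gy$ share a cell. Hence $\chi(\{x,y\}) = \chi(\{gx,gy\})$ for all $g\in G$, as required.

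Since $\UU$ is $G$-Ramsey, there is then $U\in\UU$ with $\chi|_{[U]^2}$ constant, and I would finish by a case split on the constant value. If the value is $0$, any two points of $U$ lie in a common cell; because ``lying in the same cell of $\PP$'' is an equivalence relation on $X$ whose classes are the cells, transitivity promotes this pairwise condition to the statement that all of $U$ sits inside a single cell $P$, and then $U\subseteq P$ yields $P\in\UU$. If the value is $1$, then distinct points of $U$ always lie in different cells, i.e. $|P\cap U|\leqslant 1$ for every $P\in\PP$. In either case the defining alternative of $G$-selectivity is realized, so $\UU$ is $G$-selective.

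I do not expect a genuine obstacle here: the whole content is the observation that a $G$-invariant partition induces a $G$-invariant ``same-cell'' coloring, after which the standard set-theoretic dichotomy runs unchanged. The only points demanding (routine) care are the verification of the $G$-invariance of $\chi$ and, in the color-$0$ case, the transitivity step that upgrades ``$U$ is pairwise monochromatic'' to ``$U$ lies in one cell''; both are immediate.
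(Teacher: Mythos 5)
Your proof is correct: the ``same-cell'' coloring is well-defined and $G$-invariant precisely because each $g\in G$ acts bijectively and permutes the cells of $\PP$ (note $g^{-1}\in G$ gives the converse implication), and the two monochromatic cases yield exactly the two alternatives in the definition of $G$-selectivity, with the transitivity step handled properly since a free ultrafilter member is infinite. The paper itself states this theorem without proof, deferring to reference [9]; your argument is the standard encoding of a partition as a pair-coloring, which is evidently the intended one, so there is nothing to flag.
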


The following three theorems show that the conversion of Theorem 5.3 is very far from truth. Let $G$ be a discrete group, $\beta G$ is the Stone-\v{C}ech compactification of $G$ as a left topological semigroup, $K(\beta G)$ is the minimal ideal of $\beta G$.

\begin{theorem}
Each ultrafilter from the closure $cl\; K(\beta \mathbb{Z})$ is not $\mathbb{Z}$-Ramsey.
\end{theorem}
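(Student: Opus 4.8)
The plan is to translate the assertion into a purely combinatorial statement about difference sets in $\mathbb{Z}$. Since $\mathbb{Z}$ acts on itself by translation, a $\mathbb{Z}$-invariant coloring $\chi:[\mathbb{Z}]^2\to\{0,1\}$ is exactly a coloring of differences: it is determined by a function $c:\mathbb{N}\to\{0,1\}$ via $\chi(\{x,y\})=c(|x-y|)$. Thus $\UU$ is $\mathbb{Z}$-Ramsey if and only if, for every such $c$, some $U\in\UU$ has monochromatic difference set $\{|x-y|:x,y\in U,\ x\ne y\}$. On the other hand, for the regular space $\mathbb{Z}$ the minimal invariant subsets of $\beta\mathbb{Z}$ coincide with the minimal left ideals, so by Theorem 2.1 and Theorem 4.3 a set $A$ is prethick (piecewise syndetic) exactly when $\overline{A}\cap K(\beta\mathbb{Z})\ne\varnothing$; consequently $\UU\in cl\,K(\beta\mathbb{Z})$ if and only if every member of $\UU$ is piecewise syndetic. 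Hence it suffices to exhibit a \emph{single} coloring $c$ for which no piecewise syndetic subset of $\mathbb{Z}$ has a monochromatic difference set: this one $\chi$ will then witness that every member of every $\UU\in cl\,K(\beta\mathbb{Z})$ fails the Ramsey condition.

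The coloring I would use is $c(n)=\lfloor\log_2 n\rfloor \bmod 2$. On each dyadic block $[2^k,2^{k+1})$ it is constant, so $c$ has blocks of constant value $0$ and of constant value $1$ of length $2^k$, occurring cofinally with lengths tending to infinity. The geometric input is the standard description of piecewise syndeticity on $\mathbb{Z}$: a piecewise syndetic $U$ admits a uniform gap bound $b$ such that, on arbitrarily long intervals $I$, consecutive points of $U$ differ by at most $b$. Fixing such a stretch $\{u_0<u_1<\cdots<u_k\}\subseteq U\cap I$, the differences $d_i=u_i-u_0$ all lie in the difference set of $U$, and because consecutive $d_i$ differ by at most $b$, the set $\{d_0,\dots,d_k\}$ meets every window of $b$ consecutive integers inside $[0,d_k]$.

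To finish, fix the gap bound $b$ and choose one block on which $c\equiv 0$ and one on which $c\equiv 1$, each of length exceeding $b$ and situated beyond $2b$; then take the stretch $I$ long enough that $d_k$ surpasses both blocks. The $b$-syndetic set $\{d_i\}$ must then contain a point in the $0$-block and a point in the $1$-block, yielding pairs $\{u_0,u_i\},\{u_0,u_j\}\in[U]^2$ with $\chi(\{u_0,u_i\})=0$ and $\chi(\{u_0,u_j\})=1$. So $[U]^2$ is never $\chi$-monochromatic, and no $\UU\in cl\,K(\beta\mathbb{Z})$ is $\mathbb{Z}$-Ramsey. I expect the main obstacle to lie not in the coloring but in the two structural reductions: verifying carefully that membership in $cl\,K(\beta\mathbb{Z})$ is precisely ``every member is piecewise syndetic'' (through Theorem 2.1/Theorem 4.3 and the identification of minimal invariant sets with minimal left ideals), and extracting the uniform gap bound $b$ together with the boundary bookkeeping that forces both constant blocks to lie inside $[0,d_k]$.
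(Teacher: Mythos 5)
Your proof is correct, but a caveat on comparison: this paper states Theorem 5.4 \emph{without} proof --- it is one of the results imported wholesale from \cite{b9} (``All above results are from \cite{b9}'') --- so there is no in-paper argument to measure you against, and I can only assess your argument on its own terms. It is complete. Both structural reductions are sound: a $\mathbb{Z}$-invariant coloring of $[\mathbb{Z}]^2$ is indeed exactly a coloring of $|x-y|$ (translate any pair by $-\min$), and membership in $cl\; K(\beta \mathbb{Z})$ is equivalent to every member being piecewise syndetic --- you can get this from \cite[Theorem 4.40]{b7} directly, or, as you route it, from the paper's Theorem 4.3 together with the identification, stated before Theorem 2.1, of minimal invariant subsets of $\beta G$ with minimal left ideals for the regular space, since $p\in cl\; K(\beta\mathbb{Z})$ iff every basic neighbourhood $\overline{A}$, $A\in p$, meets the minimal ideal, iff every $A\in p$ is prethick. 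The dyadic coloring $c(n)=\lfloor\log_2 n\rfloor \bmod 2$ then does what you claim: piecewise syndeticity on $\mathbb{Z}$ gives a fixed $b$ and, inside arbitrarily long intervals, runs $u_0<u_1<\dots<u_k$ of elements of $U$ with consecutive gaps at most $b$; the differences $d_i=u_i-u_0$ are then $b$-dense in $[0,d_k]$, and once you fix a block index $k$ with $2^k>b$ and arrange $d_k\geq 2^{k+2}$, both a $0$-block and a $1$-block of $c$ have length exceeding $b$ and lie inside $[0,d_k]$, so each captures some positive $d_i$, producing pairs $\{u_0,u_i\}$, $\{u_0,u_j\}$ of both colors. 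The only bookkeeping worth writing out explicitly is that for an interval of length $N$ on which $U$ is $b$-relatively dense, the span of the run satisfies $d_k\geq N-2b$, so it can indeed be pushed past $2^{k+2}$; and note that your argument proves something slightly stronger than the theorem, namely that one single invariant coloring witnesses the failure of the Ramsey property simultaneously for all ultrafilters in $cl\; K(\beta\mathbb{Z})$.
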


A free ultrafilter $\UU$ on an Abelian group $G$ is said to be a {\it Schur ultrafilter} if, for any $U\in \UU$, there are distinct $x,y\in U$ such that $x+y \in U$. 

\begin{theorem}
Each Schur ultrafilter $\UU$ on $\mathbb{Z}$ is not $\mathbb{Z}$-Ramsey.
\end{theorem}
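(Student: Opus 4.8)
The plan is to first strip away the group language and reduce to a statement about colorings of positive integers. A $\mathbb{Z}$-invariant coloring $\chi:[\mathbb{Z}]^2\to\{0,1\}$ is completely determined by $c(d)=\chi(\{0,d\})$ for $d\geq 1$: translating the pair $\{x,y\}$ by $-x$ shows $\chi(\{x,y\})=c(|x-y|)$, and $c(d)=c(-d)$ makes $c$ a genuine function of the gap. Thus $\UU$ is $\mathbb{Z}$-Ramsey precisely when, for every $c:\NN\setminus\{0\}\to\{0,1\}$, there is $U\in\UU$ on which $c$ is constant over the difference set $D(U)=\{|x-y|:x\neq y\in U\}$. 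To prove $\UU$ is \emph{not} $\mathbb{Z}$-Ramsey I must exhibit a single coloring $c$ admitting no $c$-monochromatic member of $\UU$.

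The natural attack combines the Schur triples with scale control coming from freeness. For any $U\in\UU$ the Schur property produces distinct $x,y\in U$ with $x+y\in U$, and the three pairwise differences $x,\,y,\,|x-y|$ satisfy "largest $=$ sum of the other two", i.e. they form an additive triple; if $U$ is $c$-monochromatic, this triple lies inside one color class. Freeness lets me localize such triples: for any $N$ the finite partition of $\mathbb{Z}$ into residues mod $N$ puts one class in $\UU$, and intersecting a monochromatic $U$ with that class and with $[N,\infty)$ gives a member all of whose differences are $\geq N$ and divisible by $N$, so the witnessing additive triple can be forced to live at large scale inside a single dilate $N\mathbb{Z}$. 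I would pick $c$ \emph{sensitive to doubling}, e.g. $c(d)=\lfloor\log_2 d\rfloor \bmod 2$, for which $c(2d)\neq c(d)$, so that the balanced triples $\{x,2x,3x\}$ (differences $x,x,2x$) are automatically bichromatic, and then try, by iterating the Schur property down a nested chain $U\supseteq U_1\supseteq U_2\supseteq\cdots$ of monochromatic members, to force a balanced/doubling configuration inside the monochromatic member.

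The main obstacle is Schur's theorem itself, and I expect it to be the crux rather than a formality. Any finite coloring of $\NN$ — and of every dilate $N\mathbb{N}$, since dilation preserves the equation $a+b=c$ — admits a monochromatic solution of $a+b=c$, and every additive triple is realized as the difference set of some Schur triple. Consequently no single $2$-coloring (nor any finite coloring built by intersecting finitely many monochromatic members, which is all an ultrafilter's finite-intersection closure provides) can render the relevant color class sum-free: a contradiction simply cannot be squeezed out of one Schur triple, nor from residue and scale control alone, since at every scale the designed coloring still permits a monochromatic additive triple. The genuine task is therefore to convert the fact that \emph{every} member is Schur into an infinite self-referential configuration — iterating the property down the nested chain while using freeness to separate the scales of successive witnesses, and engineering $c$ so that the accumulated constraints eventually force a difference whose color violates monochromaticity (equivalently, tying $\UU$ to $cl\,K(\beta\mathbb{Z})$ so as to invoke the coloring of Theorem 5.5). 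Getting this descent to actually close, instead of merely reproducing at each scale a monochromatic triple that Schur's theorem guarantees is available, is where the real difficulty lies.
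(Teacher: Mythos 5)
Your reduction is correct ($\mathbb{Z}$-invariant pair colorings are exactly the gap colorings $c(|x-y|)$), your extraction of a monochromatic additive gap-triple $\{y,\,x-y,\,x\}$ from a Schur witness is correct, and your localization via residues mod $N$ and tails is correct (note in passing that applying the Schur property inside a residue class $r+N\mathbb{Z}\in\UU$ forces $r\equiv 0$, so in fact $N\mathbb{Z}\in\UU$ for every $N$). But the decisive step is simply absent, and you say so yourself: the ``descent'' is never constructed, no coloring is ever shown to admit no monochromatic member, so the theorem is not proved. Moreover, the constraints you do establish are provably insufficient against your own candidate $c(d)=\lfloor\log_2 d\rfloor\bmod 2$: for every scale and every modulus $4^j$ the even class contains additive gap-triples, e.g.\ $y=4^k$, $x-y=4^m$, $x=4^m+4^k$ with $j\leqslant k<m$, all divisible by $4^j$, all with even $\lfloor\log_2\rfloor$. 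So each round of your iteration can be answered by a configuration that Schur's theorem guarantees at that scale, and nothing accumulates toward a contradiction. The doubling rationale is also misaimed: the Schur property never forces balanced witnesses of the shape $\{x,2x,3x\}$; the witnesses can be arbitrarily unbalanced, and unbalanced triples sit comfortably inside a single dyadic-parity class, as the example above shows.

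The fallback in your parenthetical --- tying $\UU$ to $cl\,K(\beta\mathbb{Z})$ so as to invoke Theorem 5.4 --- cannot work even in principle, because Schur ultrafilters need not lie in $cl\,K(\beta\mathbb{Z})$. Every member of an ultrafilter from $cl\,K(\beta\mathbb{Z})$ is piecewise syndetic (prethick), whereas, by Hindman's theorem, there is (in ZFC) an idempotent $p$ with $FS((4^n)_{n\in\w})\in p$; this $FS$-set has upper Banach density zero, hence is not piecewise syndetic, so $p\notin cl\,K(\beta\mathbb{Z})$, while every idempotent is a Schur ultrafilter. Thus Theorems 5.4 and 5.5 cover incomparable classes, and 5.5 is not a corollary of 5.4. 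For what it is worth, the paper itself gives no proof of Theorem 5.5 --- it defers to the authors' paper [9] (\emph{Selective and Ramsey ultrafilters on $G$-spaces}, Notre Dame J. Formal Logic) --- so there is no textual argument your plan partially matches; a complete proof must actually exhibit a coloring (one may even tailor it to $\UU$) together with a closed inductive argument, which is precisely the part you have left open.
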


A free ultrafilter $\UU$ on $\mathbb{Z}$ is called {\it prime} if $\UU$ cannot be represented as a sum of two free ultrafilters.

\begin{theorem}
Every $\mathbb{Z}$-Ramsey ultrafilter on $\mathbb{Z}$ is prime.
\end{theorem}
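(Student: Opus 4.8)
The plan is to prove the contrapositive: if $\UU = q+r$ for some $q,r\in\ZZ^{*}$, then $\UU$ is not $\ZZ$-Ramsey. First I would reformulate the $\ZZ$-Ramsey property in terms of differences. A colouring $\chi:[\ZZ]^2\to\{0,1\}$ is $\ZZ$-invariant exactly when $\chi(\{m,n\})$ depends only on $|m-n|$, so it is encoded by a colouring $c:\NN\to\{0,1\}$ via $\chi(\{m,n\})=c(|m-n|)$, and a set $U$ is $\chi$-monochromatic iff $U-U$ is $c$-monochromatic. Thus $\UU$ is $\ZZ$-Ramsey iff for every $c$ there is $U\in\UU$ with $U-U$ $c$-monochromatic, and to prove non-Ramseyness it suffices to produce a single colouring $c$ for which no member of $\UU$ has a monochromatic difference set.

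The engine is a structural consequence of the sum. For $A\in\UU=q+r$ put $B=\{x:-x+A\in r\}$; by the definition of the sum, $B\in q$. If $x_1,x_2\in B$ then $-x_1+A$ and $-x_2+A$ both lie in $r$, hence meet in some $y$, and then $x_1+y,\,x_2+y\in A$ with difference $x_1-x_2$; therefore $B-B\subseteq A-A$ with $B\in q$. Fixing $x_0\in B$ and setting $F=-x_0+A\in r$, the same computation gives $F-F\subseteq A-A$ with $F\in r$. So every $A\in\UU$ has a difference set containing both the difference set of a member of $q$ and the difference set of a member of $r$. It then suffices to build a colouring $c:\NN\to\{0,1\}$ with $(B-B)\cap c^{-1}(0)\neq\varnothing$ for every $B\in q$ and $(F-F)\cap c^{-1}(1)\neq\varnothing$ for every $F\in r$: for such a $c$, both colours would appear in $A-A$ for every $A\in\UU$, so no member of $\UU$ could be monochromatic.

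The hard part is the construction of this separating colouring, and it is here that the freeness of $q,r$ must be used essentially. No fixed colouring works unconditionally: by Ramsey's theorem every $c:\NN\to\{0,1\}$ admits an infinite set with monochromatic difference set, so $c$ must be tailored to $q$ and $r$. When $q$ and $r$ are far apart one tries to separate them by scale, choosing a rapidly increasing sequence and colouring by the parity of the block so that the differences forced inside members of $q$ land in colour $0$ and those forced inside members of $r$ land in colour $1$. The delicate case is when the two summands nearly coincide (in the extreme, $q=r$), since then the two requirements on $c$ become contradictory; here I would instead exploit the asymmetry between the \emph{outer} set $B\in q$, a single fixed member, and the \emph{fibres} $-x+A\in r$, which may be intersected with arbitrarily deep members of $r$. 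Combined with the mixed configuration $x_1+y,\,x_2+y,\,x_2+y'\in A$, which yields a Schur triple $\{s,t,s+t\}\subseteq A-A$ with $s=x_1-x_2$ an outer difference and $t=y-y'$ an inner one, this should let one adapt the Schur-triple colouring of Theorem 5.6 to force two colours into $A-A$. Verifying that a single colouring simultaneously defeats all members of $\UU$, uniformly across the whole range from $q\neq r$ to $q=r$, is the main obstacle.
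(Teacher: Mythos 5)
Your reduction is sound as far as it goes: the translation of $\ZZ$-invariant colourings of $[\ZZ]^2$ into colourings $c$ of positive differences is correct, and so is the structural lemma that for $A\in\UU=q+r$, with $B=\{x:-x+A\in r\}\in q$ and $F=-x_0+A\in r$, both $B-B$ and $F-F$ are realized as differences of elements of $A$. The genuine gap is that your proposed engine --- a single colouring $c$ putting colour $0$ into $B-B$ for every $B\in q$ and colour $1$ into $F-F$ for every $F\in r$ --- provably cannot exist in general, so the plan in its stated form cannot be completed, only abandoned in favour of the fallback you sketch. To see the impossibility concretely, take $q=r$ a selective ultrafilter on $\ZZ$ (consistent with ZFC, e.g. under CH) and $\UU=q+q$. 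Selectivity means that for \emph{every} colouring of $[\ZZ]^2$, in particular every invariant one induced by a $c$, some $B\in q$ is monochromatic, i.e. $c$ is constant on the positive part of $B-B$; hence no $c$ can force both colours into the difference sets of all members of $q$ simultaneously. So the case you flag as ``delicate'' is not delicate but fatal for the separating-colouring strategy, and since $\UU=q+q$ is a perfectly good non-prime ultrafilter, any proof of the theorem must run through the mixed configurations instead.

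You do identify the right extra structure --- the triples $x_1+y,\ x_2+y,\ x_2+y'$ in $A$ giving a Schur pattern $s$, $t$, $s+t$ among the differences, with $s$ an outer difference from $B\in q$ and $t$ an inner difference that can be forced inside an arbitrarily deep member of $r$ --- but this is precisely where the proof would have to happen, and you leave it explicitly unresolved as ``the main obstacle.'' Constructing a colouring of $\NN$ that defeats all such Schur patterns uniformly (despite Schur's theorem guaranteeing monochromatic triples somewhere) is the entire content of the theorem, not a verification step. Two smaller points: the Schur-triple result you want to adapt is Theorem 5.5, not Theorem 5.6, which is the statement under proof; and note that the paper itself gives no proof of either, quoting both from \cite{b9}, so there is no in-paper argument to lean on. As it stands, your proposal is a correct first reduction followed by a strategy whose literal form is refuted in the critical case $q=r$ and whose repaired form is only gestured at; it is an outline, not a proof.
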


\begin{question}
Is each $\mathbb{Z}$-Ramsey ultrafilter on $\mathbb{Z}$ selective?
\end{question}

Some partial positive answers to this question are in the following two theorems.

\begin{theorem}
Assume that a free ultrafilter $\UU$ on $\mathbb{Z}$ has a member $A$ such that $|g+A \cap A|\leqslant 1$ for each $g\in \mathbb{Z} \setminus \{0\}$. If $\UU$ is $\mathbb{Z}$-Ramsey then $\UU$ is selective.
\end{theorem}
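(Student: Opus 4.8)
The plan is to reduce an arbitrary $2$-coloring of $[\mathbb{Z}]^2$ to a $\mathbb{Z}$-invariant one by exploiting the structure of the distinguished member $A$. First I would observe that the hypothesis $|(g+A)\cap A|\leqslant 1$ for every $g\in\mathbb{Z}\setminus\{0\}$ says precisely that $A$ is a Sidon set: an element $x$ lies in $(g+A)\cap A$ exactly when $x\in A$ and $x-g\in A$, i.e. when the pair $\{x,x-g\}\subseteq A$ realizes the difference $g$, so the hypothesis means that for each $d>0$ there is at most one pair $\{a,a'\}\in[A]^2$ with $|a-a'|=d$. By the characterization of selective ultrafilters recorded in Section 5 (selective $\Leftrightarrow$ homogeneous for every $2$-coloring of unordered pairs, a purely set-theoretic property transported to $\mathbb{Z}$ via a bijection with $\w$), it suffices to prove that $\UU$ is Ramsey for $2$-colorings: for every $\chi\colon[\mathbb{Z}]^2\to\{0,1\}$ there is $U\in\UU$ with $\chi|_{[U]^2}$ constant.

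So fix an arbitrary $\chi\colon[\mathbb{Z}]^2\to\{0,1\}$. The key construction is a $\mathbb{Z}$-invariant coloring $\chi^\ast$ that agrees with $\chi$ on pairs drawn from $A$. For a positive integer $d$, if $d$ is realized as a difference of two elements of $A$, let $\{a_d,a_d'\}$ be the pair from $[A]^2$ with $|a_d-a_d'|=d$ --- unique by the Sidon property --- and declare the value of $\chi^\ast$ on difference $d$ to be $\chi(\{a_d,a_d'\})$; if $d$ is not a difference of two elements of $A$, assign the value $0$. Setting $\chi^\ast(\{x,y\})$ equal to this value at $d=|x-y|$ produces a coloring of $[\mathbb{Z}]^2$ depending only on the difference, hence $\mathbb{Z}$-invariant, and the uniqueness in the Sidon condition is exactly what makes the assignment well posed.

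Now I would apply the $\mathbb{Z}$-Ramsey hypothesis to $\chi^\ast$: there is $U\in\UU$ with $\chi^\ast|_{[U]^2}$ constant. Replacing $U$ by $U\cap A$, which is still in $\UU$ since $A\in\UU$ and $\UU$ is a filter, we may assume $U\subseteq A$. For distinct $x,y\in U\subseteq A$, the pair $\{x,y\}$ is the unique pair of $A$ realizing the difference $|x-y|$, so by construction $\chi^\ast(\{x,y\})=\chi(\{x,y\})$. Consequently $\chi$ is constant on $[U]^2$, and $U\in\UU$ is the desired homogeneous set. This establishes that $\UU$ is Ramsey for $2$-colorings, and therefore selective.

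The argument is short once the Sidon reformulation is in hand; the only point requiring genuine care --- and the step I expect to be the crux --- is verifying that $\chi^\ast$ is well defined as a function of the difference alone, which fails outright without the $|(g+A)\cap A|\leqslant 1$ hypothesis, since two different pairs of $A$ with the same difference could receive opposite $\chi$-values. The remaining manipulations (passing from $U$ to $U\cap A$ and matching $\chi^\ast$ with $\chi$ on $A$) are routine filter and Sidon-set bookkeeping.
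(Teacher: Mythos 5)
Your proof is correct and complete. The paper states this theorem without proof (all results of Section 5 are quoted from \cite{b9}), but your argument --- reformulating $|(g+A)\cap A|\leqslant 1$ as the Sidon property of $A$, transferring an arbitrary coloring $\chi$ of $[\mathbb{Z}]^2$ to a well-defined $\mathbb{Z}$-invariant coloring $\chi^\ast$ via the unique pair of $A$ realizing each difference, applying the $\mathbb{Z}$-Ramsey hypothesis to $\chi^\ast$, and intersecting the resulting homogeneous set with $A$ --- is the natural route to the result and checks out at every step, including the two points requiring care: well-definedness of $\chi^\ast$ (exactly the Sidon uniqueness) and the fact that for the regular $\mathbb{Z}$-space a pair-coloring is $\mathbb{Z}$-invariant precisely when it depends only on $|x-y|$.
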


\begin{theorem}
Every $(\mathbb{Z}, 4)$-Ramsey ultrafilter on $\mathbb{Z}$ is selective.
\end{theorem}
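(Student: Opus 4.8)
The plan is to reduce this statement to the preceding Theorem 5.7, which asserts that a $\mathbb{Z}$-Ramsey ultrafilter possessing a member $A$ with $|(g+A)\cap A|\leqslant 1$ for every $g\neq 0$ — call such an $A$ a \emph{Sidon} member, one all of whose pairwise differences are distinct — is selective. So it suffices to establish two facts about a $(\mathbb{Z},4)$-Ramsey ultrafilter $\UU$: first, that $\UU$ has a Sidon member, and second, that $\UU$ is $\mathbb{Z}$-Ramsey, i.e. $(\mathbb{Z},2)$-Ramsey. I would carry these out in this order and then invoke Theorem 5.7.

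For the first step I would use that ``having a repeated difference'' is a translation-invariant property of a finite set. Define $\chi:[\mathbb{Z}]^4\to\{0,1\}$ by setting $\chi(\{p_1<p_2<p_3<p_4\})=1$ precisely when the quadruple is not Sidon, that is, when two distinct pairs drawn from $\{p_1,p_2,p_3,p_4\}$ share the same positive difference; this covers both the additive-quadruple case $p_1+p_4=p_2+p_3$ and the three-term progression case. Since differences are preserved under translation, $\chi$ is $\mathbb{Z}$-invariant, so $(\mathbb{Z},4)$-Ramseyness yields $U\in\UU$ with $\chi$ constant on $[U]^4$. I would then argue this constant is $0$: a free ultrafilter has only infinite members, and every infinite subset of $\mathbb{Z}$ contains an infinite Sidon subset, built greedily with each new element avoiding the finitely many values that would repeat an existing difference or complete a progression; a four-element subset of such a Sidon subset forces $\chi=0$. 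Finally, from $\chi\equiv 0$ on $[U]^4$ one checks that $U$ itself is Sidon: a repeated difference in $U$ involves at most four points, which (after adjoining one more point of $U$ in the three-point case) form a non-Sidon quadruple, contradicting $\chi\equiv 0$. Hence $A:=U$ is the desired Sidon member of $\UU$.

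For the second step I would prove the implication $(\mathbb{Z},4)$-Ramsey $\Rightarrow(\mathbb{Z},2)$-Ramsey by a ``read off the smallest pair'' device. Given a $\mathbb{Z}$-invariant $c:[\mathbb{Z}]^2\to\{0,1\}$, put $\chi(\{p_1<p_2<p_3<p_4\})=c(\{p_1,p_2\})$; invariance is inherited since $c(\{p_1,p_2\})$ depends only on $p_2-p_1$. A $\chi$-monochromatic $U\in\UU$ is then $c$-monochromatic: for any $p_1<p_2$ in $U$, choosing two further elements of $U$ above $p_2$ exhibits $\{p_1,p_2\}$ as the smallest pair of a quadruple, whence $c(\{p_1,p_2\})$ equals the fixed value of $\chi$. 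Thus $\UU$ is $\mathbb{Z}$-Ramsey, and combining the two steps, $\UU$ satisfies the hypotheses of Theorem 5.7 and is therefore selective.

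I expect the first step to be the crux. Its delicate points are the standard but essential fact that every infinite set of integers contains an infinite Sidon subset, which is what pins the monochromatic colour to $0$, and the need to design the single invariant coloring $\chi$ so that it detects \emph{all} ways a quadruple can fail to be Sidon; only then does $\chi\equiv 0$ upgrade from ``no non-Sidon quadruple'' to ``$U$ is globally Sidon'', matching exactly the condition $|(g+A)\cap A|\leqslant 1$ required by Theorem 5.7. The second step and the final invocation are then routine. It is worth noting that the arity $4$ is essential here: detecting a repeated difference between two \emph{disjoint} pairs genuinely requires four points, which is why $(\mathbb{Z},2)$- or $(\mathbb{Z},3)$-Ramseyness would not suffice to produce a Sidon member.
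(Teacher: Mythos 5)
Your proof is correct: the invariant $4$-coloring that detects a repeated difference, the pinning of the monochromatic colour to $0$ via an infinite Sidon subset (which exists greedily inside every infinite subset of $\mathbb{Z}$), the handling of the three-point (arithmetic-progression) case by adjoining a fourth point, and the padding argument showing that $(\mathbb{Z},4)$-Ramsey implies $\mathbb{Z}$-Ramsey are all sound, so Theorem 5.7 applies. The paper itself gives no proof here (Theorems 5.4--5.8 are quoted from \cite{b9}), but your reduction --- extract a member $A$ with $|(g+A)\cap A|\leqslant 1$ for all $g\neq 0$ using arity $4$, then invoke the preceding theorem --- is exactly the intended route suggested by the juxtaposition of Theorems 5.7 and 5.8, so your approach is essentially the same as the source's.
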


All above results are from \cite{b9}.

\begin{remark}
Let $G$ be an Abelian group. A coloring $\chi: [G]^2 \to \{0,1\}$ is called a PS-{\it coloring} if $\chi(\{a,b\}) = \chi(\{a-g, b+g\})$ for all $\{a,b\} \in [G]^2$, equivalently, $a+b=c+d$ implies $\chi(\{a,b\}) = \chi(\{c,d\})$. A free ultrafilter $\UU$ on $G$ is called a $PS$-{\it ultrafilter} if, for any PS-coloring $\chi$ of $[G]^2$, there is $U\in \UU$ such that $\chi|_{[U]^2}\equiv const$. The following statements were proved in \cite{b18}, see also \cite[Chapter 10]{b6}.

If $G$ has no elements of order 2 then each PS-ultrafilter on $G$ is selective. A strongly summable ultrafilter on the countable Boolean group $\mathbb{B}$ is a PS-ultrafilter but not selective. If there exists a PS-ultrafilter on some countable Abelian group then there is a $P$-point in $\omega^*$. 

Clearly, an ultrafilter $\UU$ on $\mathbb{B}$ is a PS-ultrafilter if and only if $\UU$ is $\mathbb{B}$-Ramsey. Thus, a $\mathbb{B}$-Ramsey ultrafilter needs not to be selective, but such an ultrafilter cannot be constructed in ZFC with no additional assumptions.
\end{remark}

\section{Thin ultrafilters}
A free ultrafilter $\UU$ on $\omega$ is said to be
\begin{itemize}
  \item {\it $P$-point} if, for any partition $\mathcal{P}$ of $\omega$, either $A\in \UU$ for some cell $A$ of $\PP$ or there exists $U\in \UU$ such that $U\cap A$ is finite for each $A\in \PP$;
  \item {\it $Q$-point} if, for any partition $\PP$ of $\w$ into finite subsets, there exists $U\in \UU$ such that $|U\cap A| \leqslant 1$ for each $A\in \PP$.
\end{itemize}

Clearly, $\UU$ is selective if and only if $\UU$ is a $P$-point and a $Q$-point. It is well known that the existence of $P$- or $Q$-points is independent of the system of axioms ZFC.

We say that a free ultrafilter $\UU$ on $\w$ is a {\it $T$-point} if, for every countable group $G$ of permutations of $\w$, there is a thin subset $U\in\UU$ in the $G$-space $\w$.

To give a combinatorical characterization of $T$-points (see \cite{b8,b9}), we need some definitions.

A covering $\F$ of a set $X$ is called uniformly bounded if there exists $n\in \NN$ such that $|\cup\{F\in \F : x\in F\}|\leqslant n$ for each $x\in X$.

For a metric space $(X,d)$ and $n\in\NN$, we denote $B_d(x,n)=\{y\in X: d(x,y)\leqslant n\}$. A metric $d$ is called {\it locally finite (uniformly locally finite)} if, for every $n\in\NN$, $B_d(x,n)$ is finite for each $x\in X$ (there exists $c(n)\in\NN$ such that $|B_d(x,n)|\leqslant c(n)$ for each $x\in X$).

A subset $A$ of $(X,d)$ is called {\it $d$-thin} if, for every $n\in \NN$ there exists a bounded subset $B$ of $X$ such that $B_d(a,n)\cap A=\{a\}$ for each $a\in A\setminus B$.

\begin{theorem} For a free ultrafilter $\UU$ on $\w$, the following statement are equivalent:
\begin{itemize}
  \item[(i)] $\UU$ is a $T$-point;
  \item[(ii)] for any sequence $(\F_n)_{n\in\w}$ of uniformly bounded coverings of $\w$, there exists $U\in\UU$ such that, for each $n\in\w$, $|F\cap U|\leqslant 1$ for all but finitely many $F\in \F_n$;
  \item[(iii)] for each uniformly locally finite metric $d$ on $\w$, there is a $d$-thin member $U\in\UU$.
\end{itemize}
\end{theorem}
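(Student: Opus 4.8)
The plan is to establish the cycle $(i)\Rightarrow(ii)\Rightarrow(iii)\Rightarrow(i)$, exploiting the guiding principle that a countable group of permutations, a sequence of uniformly bounded coverings, and a uniformly locally finite metric are three interchangeable ways of encoding one coarse structure on $\w$, and that thinness of a subset passes between these encodings. In each direction the member $U\in\UU$ produced at the end is a single set that must serve simultaneously for all ``scales'' $n$; this is automatic because $G$-thinness (respectively $d$-thinness) already quantifies over all finite $F\subseteq G$ (respectively all radii $n$) for one fixed set.

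For $(ii)\Rightarrow(iii)$ I would start from a uniformly locally finite metric $d$ and observe that for each $n$ the family $\F_n=\{B_d(x,n):x\in\w\}$ is a uniformly bounded covering: by the triangle inequality $\bigcup\{B_d(x,n):y\in B_d(x,n)\}\subseteq B_d(y,2n)$, whose cardinality is at most $c(2n)$. Feeding $(\F_n)_{n\in\w}$ into $(ii)$ yields $U\in\UU$ with $|F\cap U|\le 1$ for all but finitely many $F\in\F_n$. Since every member of $\F_n$ has at most $c(n)$ points, the finitely many ``bad'' balls $B_d(a,n)$ with $|B_d(a,n)\cap U|\ge 2$ contain only finitely many points of $U$; hence for cofinitely many $a\in U$ one has $B_d(a,n)\cap U=\{a\}$, i.e.\ $U$ is $d$-thin.

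For $(iii)\Rightarrow(i)$, given a countable group $G=\{g_0=e,g_1,g_2,\dots\}$ of permutations of $\w$, I would manufacture a uniformly locally finite metric $d$ in which every element acts with bounded displacement. Take $d$ to be the path metric of the graph on $\w$ whose edges are the ``line'' edges $\{k,k+1\}$ of length $1$ (guaranteeing connectedness even when the action is not transitive) together with the ``generation $i$'' edges $\{x,g_i x\}$ of length $2^i$. Then $d(x,g_i x)\le 2^i$ for all $x$, so any finite $F\subseteq G$ displaces points a bounded amount; and within any radius $r$ only generations $i$ with $2^i\le r$ are usable, so the balls grow at a rate depending only on $r$ and $d$ is uniformly locally finite. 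Applying $(iii)$ gives a $d$-thin $U\in\UU$; for finite $F\subseteq G$ choose $n$ with $Fx\subseteq B_d(x,n)$ for all $x$, whence $Fa\cap U\subseteq B_d(a,n)\cap U=\{a\}$ for cofinitely many $a\in U$, so $U$ is thin in the $G$-space $\w$ and $\UU$ is a $T$-point.

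The hard part is $(i)\Rightarrow(ii)$: given uniformly bounded coverings $(\F_n)_{n\in\w}$ I must fabricate a single countable permutation group whose thin sets see all the coverings at once. For each $n$ I would form the intersection graph $\Gamma_n$ on $\w$, joining distinct $x,y$ that lie in a common member of $\F_n$; the uniform bound $m_n$ forces $\Gamma_n$ to have degree $\le m_n-1$, so by a Vizing-type edge colouring of a bounded-degree (possibly infinite) graph its edges split into finitely many matchings, each of which is an involution $\sigma^{(n)}_j$ of $\w$. With $F_n=\{e,\sigma^{(n)}_1,\dots,\sigma^{(n)}_{k_n}\}$ one checks that $F_n x$ equals the star $\bigcup\{F\in\F_n:x\in F\}$. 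Let $G$ be generated by all the $\sigma^{(n)}_j$; it is countable, so the $T$-point property supplies a $G$-thin $U\in\UU$. For each $n$, applying $G$-thinness to the finite set $F_n$ produces a finite $K_n$ outside which $F_n a\cap U=\{a\}$, i.e.\ the star of $a$ meets $U$ only in $a$. The one genuinely subtle bookkeeping point is that the uniform bound constrains the \emph{union} of the members through a point, not their number; but any $F\in\F_n$ containing a fixed $a$ is a subset of the finite star of $a$, so at most $2^{m_n}$ members of $\F_n$ pass through $a$. Consequently every $F\in\F_n$ with $|F\cap U|\ge 2$ meets the finite bad set $K_n$, and only finitely many such $F$ exist, which is exactly $(ii)$. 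I expect this construction of the group from the coverings --- in particular the edge-colouring of the intersection graphs into involutions and the passage from stars to group balls --- to be the main technical obstacle.
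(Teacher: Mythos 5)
Your proof is correct, and it is worth noting how it relates to the paper's route. The paper does not prove Theorem 6.1 in-text: it cites \cite{b8,b9} and records (after Theorem \ref{t_7_4}) that the key ingredient is the structure theorem that every uniformly locally finite ballean with support $X$ is asymorphic to $\B(G,X,[G]^{<\omega})$ for some subgroup $G$ of $S_X$, which converts metrics and uniformly bounded coverings into countable permutation groups in one stroke. You instead close the cycle $(i)\Rightarrow(ii)\Rightarrow(iii)\Rightarrow(i)$ directly, and your $(i)\Rightarrow(ii)$ inlines exactly the combinatorial core of Theorem \ref{t_7_4} in the special case needed: the intersection graph of a uniformly bounded covering has degree at most $m_n-1$; its edge set splits into finitely many matchings (on a countable bounded-degree graph a greedy colouring with $2\Delta-1$ colours already suffices, so full Vizing is not needed); each matching extends to an involution; and the finitely many involutions reproduce the stars as balls $F_nx$. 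Your treatment of the one genuine subtlety is also right: the uniform bound controls the union, not the number, of members through a point, but every member through $a$ sits inside the finite star, so at most $2^{m_n}$ distinct members pass through $a$ and all bad members are trapped by the finite exceptional set $K_n$ --- note this relies on reading a covering as a set of subsets rather than an indexed family with repetitions, which matches the paper's usage in the definition of uniform boundedness. The other two legs are sound as well: in $(ii)\Rightarrow(iii)$ the triangle-inequality blow-up $B_d(x,n)\subseteq B_d(y,2n)$ gives the bound $c(2n)$ and the finitely many bad balls each have at most $c(n)$ points; in $(iii)\Rightarrow(i)$ the weighted graph metric with line edges of length $1$ and edges $\{x,g_ix\}$ of length $2^i$ keeps balls of radius $r$ uniformly finite (only levels with $2^i\leqslant r$ contribute) while giving every finite $F\subseteq G$ uniformly bounded displacement. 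In sum: the paper's route buys brevity and a reusable general asymorphism theorem; yours buys a self-contained elementary argument that isolates the only piece of Theorem \ref{t_7_4} actually used, namely the decomposition of a uniformly bounded covering into finitely many involutions.
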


We do not know if a sequence of coverings in $(ii)$ can be replaced to a sequence of partitions.
\begin{remark}
By \cite[Theorem 3]{b10}, a free ultrafilter $\UU$ on $\omega$ in selective if and only if, for every metric $d$ on $\w$, there is a $d$-thin member of $\UU$.
\end{remark}

\begin{remark}
By \cite[Theorem 8]{b10}, a free ultrafilter $\UU$ on $\omega$ is a $Q$-point if and only if, for every locally finite metric $d$ on $\w$, there is a $d$-thin member of $\UU$.
\end{remark}

\begin{remark}
It is worth to be mentioned the following metric characterization of $P$-points: a free ultrafilter $\UU$ on $\w$ is a $P$-point if and only if, for every metric $d$ on $\w$, either some member of $\UU$ is bounded or there is $U\in \UU$ such that $(U,d)$ is locally finite.
\end{remark}

A free ultrafilter $\UU$ on $\w$ is said to be a {\it weak $P$-point} (a {\it NWD-point}) if $\UU$ is not a limit point of a countable subset in $\w^*$ (for every injective mapping $f:\w\to \mathbb{R}$, there is $U\in \UU$ such that $f(U)$ is nowhere dense in $\mathbb{R}$). We note that a weak $P$-point exists in ZFC.

In the next theorem, we summarize the main results from \cite{b8}.
\begin{theorem}
  Every $P$-point and every $Q$-point is a $T$-point. Under CH, there exists a $T$-point which is neither $P$-point, nor NWD-point, nor $Q$-point. For every ultrafilter $\mathcal{V}$ on $\w$, there exist a $T$-point $\UU$ and a mapping $f:\w\to\w$ such that $\mathcal{V}=f^\beta(\UU)$.
\end{theorem}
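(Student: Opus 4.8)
The plan is to treat the three assertions separately. For the first, I would verify the $P$-point and $Q$-point cases through the characterizations of Theorem 6.1. The $Q$-point case is immediate: by Remark 6.2 a $Q$-point has a $d$-thin member for every locally finite metric $d$, and since every uniformly locally finite metric is locally finite, Theorem 6.1(iii) then identifies it as a $T$-point. For a $P$-point I would use the covering form 6.1(ii). Given uniformly bounded coverings $(\F_n)_{n\in\w}$ with bounds $b_n$, I associate to $\F_n$ the graph on $\w$ joining two points whenever they lie in a common member; uniform boundedness forces the maximum degree below $b_n$, so a greedy proper colouring splits $\w$ into at most $b_n$ classes, each meeting every $F\in\F_n$ in at most one point. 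I select the class $C_n\in\UU$ and invoke the $P$-point property to find $U\in\UU$ with $U\subseteq^{*}C_n$ for all $n$. To finish, if $F\in\F_n$ meets $U$ in two points then, as $C_n$ meets $F$ in at most one point, $F$ meets the finite set $U\setminus C_n$; since each point lies in only finitely many members of $\F_n$, only finitely many such $F$ exist, so $U$ witnesses (ii).

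For the second assertion I would build $\UU$ by transfinite recursion of length $\w_1$, using CH to enumerate all sequences of uniformly bounded coverings and, at stage $\alpha$, adjoining a member that is eventually at most a singleton on each covering of the $\alpha$-th sequence (again produced by the colouring device above), taking pseudo-intersections at countable limits. To defeat the three positive properties I would reserve at the outset three coverings that are locally finite but \emph{not} uniformly bounded: a partition $\PP^{Q}$ of $\w$ into finite cells of unbounded size, a partition into infinitely many infinite cells, and, after fixing a bijection of $\w$ onto a dense subset of $\mathbb{R}$, the demand that every member have dense image. Throughout I would keep each filter member meeting cofinally many cells of $\PP^{Q}$ twice, meeting some cell of the infinite partition infinitely, and dense under the bijection; this is consistent with the $T$-point requirement precisely because these three reserved coverings are not uniformly bounded, so clause (ii) never asks for a transversal of them. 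The delicate point is to show that at each stage one can thin against the next covering sequence while preserving all three reserved demands and the finite intersection property.

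For the third assertion I would realise $\V$ as a projection. Writing $\w$ as a disjoint union of blocks $B_n$ with $|B_n|\to\infty$ and letting $f$ collapse $B_n$ to $n$, one has $f^{\beta}(\UU)=\V$ for every ultrafilter $\UU$ refining the filter $\{f^{-1}(V):V\in\V\}$. Working with the permutation-group definition of a $T$-point, for a fixed countable group $G\leqslant\mathrm{Sym}(\w)$ I would build a selector $U_{G}$ by choosing one point $u_n\in B_n$ that avoids the finitely many images, under the first $n$ generators of $G$, of the earlier choices; since $|B_n|\to\infty$ such a choice always exists and renders $U_{G}$ both $G$-thin and compatible with $f^{-1}(\V)$. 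I would then enumerate all countable subgroups $\{G_{\xi}:\xi<\mathfrak{c}\}$ and, by recursion, build an increasing chain of filters above $f^{-1}(\V)$, at stage $\xi$ inserting a $G_{\xi}$-thin set. The main obstacle, and the step I expect to be hardest, is the compatibility bookkeeping of this recursion: since two selectors may be almost disjoint, at each stage one must produce a $G_{\xi}$-thin set whose intersection with every previously chosen member and with every $f^{-1}(V)$ remains infinite, so that the whole system stays a filter projecting onto $\V$. The growth of the blocks is exactly the device supplying the necessary room, and verifying that the finite intersection property survives all $\mathfrak{c}$ stages is the crux of the argument.
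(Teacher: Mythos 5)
The paper does not prove this theorem at all: it is stated as a summary of the main results of \cite{b8}, so your attempt can only be judged on its own merits. Your first part is correct and essentially the intended argument. The $Q$-point case via Remark 6.2 plus Theorem 6.1(iii) is immediate, and your $P$-point argument is sound: uniform boundedness bounds the degree of the incidence graph of $\F_n$, a greedy colouring gives finitely many classes each meeting every $F\in\F_n$ at most once, the ultrafilter selects $C_n$, the pseudo-intersection characterization of $P$-points gives $U\subseteq^* C_n$, and since each point lies in only finitely many members of $\F_n$ (they are distinct subsets of its star), only finitely many $F\in\F_n$ can meet the finite set $U\setminus C_n$. This verifies 6.1(ii).

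Your second part has a genuine gap beyond the ``delicate point'' you flag. As described, the recursion only handles covering sequences and limits, so it produces a \emph{filter}, not an ultrafilter; but the three negative properties are assertions about \emph{every} member of $\UU$, and an arbitrary ultrafilter extension of your filter can perfectly well contain a transversal of $\PP^{Q}$ or a set with finite trace on each cell of the infinite partition. So you must also enumerate all subsets of $\w$ and decide each one along the tower, and this is exactly where your invariant breaks: ``meeting cofinally many cells of $\PP^{Q}$ twice'' does not survive deciding sets. If the current tower element $U$ meets cofinally many cells in exactly two points, an adversarial $X$ containing exactly one point of each such pair makes both $U\cap X$ and $U\setminus X$ partial transversals of $\PP^{Q}$, and you are forced to add a transversal. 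You need the stronger invariant that for every $m$ infinitely many cells carry at least $m$ points of $U$ (this does pass to one side of any split, and is preservable under the diagonal thinning), and you must additionally argue that a \emph{single} side of each split can be chosen preserving all three reserved demands simultaneously --- the three separate ``one side works'' arguments may vote for different sides, and the proposal says nothing about this conflict.

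The third part is where the approach fails outright, and there is an external warning sign: your recursion as written uses no hypothesis beyond ZFC, so if it worked it would produce a $T$-point in ZFC, answering the paper's open Question 6.1; some step must therefore be irreparable at this level of generality. Concretely, the selector backbone is the problem. Compatibility of two selectors $U_{G_\xi}$, $U_{G_\eta}$ with $f^{-1}(\V)$ forces their agreement set of block indices to be $\V$-positive, so the selectors cannot be built independently; worse, once a selector $s$ is committed, the enumeration eventually reaches the countable group generated by a permutation $g$ with $g(s(n))=s(n+1)$, and any $G$-thin set $N$ compatible with $U_s$ and $f^{-1}(\V)$ yields an index set $E=\{n: s(n)\in N\}$ that is both $\V$-positive and thin in the natural metric on $\w$. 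Since the finite unions of metric-thin sets form a proper ideal, there is an ultrafilter $\V$ in which every metric-thin set is null, and for such $\V$ the recursion provably dies at that stage. Your closing remark that the growth of the blocks supplies the room is the right instinct, but one-point-per-block selectors destroy that room immediately; the construction must instead maintain \emph{large} traces on the blocks (so that thinness is achieved by spreading points inside blocks, leaving $f(N)$ cofinite rather than thinning the index set), together with an amalgamation device at limit stages that preserves $f(U)\in\V$ for an arbitrary, non-$P$-point $\V$ --- and that is the actual content of the proof in \cite{b8}, which your sketch does not reach.
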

\begin{question}
Does there exist a $T$-point in ZFC?
\end{question}

\begin{question}
Is every weak $P$-point a $T$-point?
\end{question}

\begin{question}
(T. Banakh). Let $\UU$ be a free ultrafilter on $\w$ such that, for any metric $d$ on $\w$, some member of $\UU$ is discrete in $(X,d)$. Is $\UU$ a $T$-point?
\end{question}

A free ultrafilter $\UU$ on $\w$ is called a $T_{\aleph_0}$-point if, for each minimal well ordering $<$ of $\w$, there is a $d_<$-thin member of $\UU$, where $d_<$ is the natural metric on $\w$ defined by $<$. By Theorem 6.1, each $T$-point is $T_{\aleph_0}$-point.
\begin{question}
Is every $T_{\aleph_0}$-point a $T$-point? Does there exist a $T_{\aleph_0}$-point in ZFC?
\end{question}

\begin{remark}
  An ultrafilter $\UU$ on $\w$ is called {\it rapid} if, for any partition $\{P_n: n\in \w\}$ of $\w$ into finite subsets, there exists $U\in \UU$ such that $|U\cap P_n|\leqslant n$ for every $n\in \w$. Jana Fla\v{s}kov\'{a} (see \cite[p.350]{b10}) noticed that, in contrast to $Q$-points, a rapid ultrafilter needs not to be a $T$-point.
\end{remark}

\begin{remark}
  A family $\F$ of infinite subsets of $\w$ is {\it coideal} if $M\subseteq N, M\in\F \Rightarrow N\in \F$ and $M=N_0\cup N_1, M\in \F \Rightarrow N_0\in\F \vee N_1 \in \F$. Clearly, the family of all infinite subsets of $\w$ is a coideal.
\end{remark}

Following \cite{b27}, we say that a coideal F is
\begin{itemize}
  \item {\it $P$-coideal} if, for every decreasing sequence $(A_n)_{n\in\w}$ in $\F$ there is $B\in \F$ such that $B\setminus A_n$ is finite for each $n\in \w$;
  \item {\it $Q$-coideal} if, for every $A\in \F$ and every partition $A=\cup_{n\in\w}F_n$ with $F_n$ finite, there is $B\in\F$ such that $B\subseteq A$ and $|B\cap F_n|\leqslant 1$ for each $n\in\w$.
\end{itemize}

We say that a coideal $\F$ is a {\it $T$-coideal} if, for every countable group $G$ of permutations of $\w$ and every $M\in\F$ there exists a $G$-thin subset $N\in\F$ such that $N\subseteq M$.

Generalizing the first statement in Theorem 6.2, we get: every $P$-coideal and every $Q$-coideal is a $T$-coideal.

\begin{remark}
We say that $\UU \in \w^*$ is sparse (scattered) if, for every countable group $G$ of permutations of $\w$, there is sparse (scattered) in $(G,w)$ member of $\UU$. Clearly, $T$-point $\Rightarrow$ sparse ultrafilter $\Rightarrow$ scattered ultrafilter.
\end{remark}

\begin{question}
Does there exist sparse (scattcred) ultrafilter in ZFC? Is every weak $P$-point scattered ultrafilter?
\end{question}

\begin{question}
  Let $\UU$ be a free ultrafilter on $\w$ such that, for every countable group $G$ of permutations of $\w$, the orbit $\{g\UU: g\in G\}$ is discrete in $\w^*$. Is $\UU$ a weak $P$-point?
\end{question}

\section{The ballean context}

Following \cite{b21,b25}, we say that a {\it ball structure} is a triple $\mathcal{B}=(X,P,B)$, where $X$, $P$ are non-empty sets and, for every $x\in X$ and $\alpha\in P$, $B(x,\alpha)$ is a subset of $X$ which is called a \emph{ball of radius} $\alpha$ around $x$. It is supposed that $x\in B(x,\alpha)$ for all $x\in X$ and $\alpha\in P$.  The set $X$ is called the {\it support} of $\mathcal{B}$, $P$ is called the set of {\it radii}.

Given any $x\in X, A\subseteq X$ and  $\alpha\in P$ we set
$$B^*(x,\alpha)=\{y\in X:x\in B(y,\alpha)\},\
B(A,\alpha)=\bigcup_{a\in A}B(a,\alpha)$$

A ball structure $\mathcal{B}=(X,P,B)$ is called a {\it ballean} if
\begin{itemize}
\item for any $\alpha,\beta\in P$,
there exist $\alpha',\beta'$ such that, for every $x\in X$,
$$B(x,\alpha)\subseteq B^*(x,\alpha'),\ B^*(x,\beta)\subseteq B(x,\beta');$$
\item for any $\alpha,\beta\in P$,
there exists $\gamma\in P$ such that, for every $x\in X$,
$$B(B(x,\alpha),\beta)\subseteq B(x,\gamma);$$
\end{itemize}

A ballean $\mathcal{B}$ on $X$ can also be determined in terms of entourages of the diagonal of $X\times X$ ( in this case it is called a coarse structure \cite{b26}) and can be considered as an asymptotic counterpart of a uniform topological space.

Let $\mathcal{B}_1=(X_1,P_1,B_1)$, $\mathcal{B}_2=(X_2,P_2,B_2)$ be balleans. A mapping $f:X_1\rightarrow X_2$ is called a $\prec$-\emph{mapping} if, for every $\alpha\in P_1$, there exists $\beta\in P_2$ such that, for every $x\in X_1$,
$f(B_1(x,\alpha))\subseteq B_2(f(x),\beta)$. A bijection $f:X_1\rightarrow X_2$ is called an {\it asymorphism} if $f$ and $f^{-1}$ are $\prec$-mappings.

Every metric space $(X,d)$ defines the metric ballean $(X, \mathbb{R}^+, B_d)$, where $B_d(x,r)=\{y\in X: d(x,y) \leqslant r\}$. By \cite[Theorem 2.1.1]{b25}, a ballean $(X, P, B)$ is metrizable (i.e. asymorphic to some metric ballean) if and only if there exists a sequence $(\alpha_n)_{n\in\omega}$ in $P$ such that, for every $\alpha \in P$, one can find $n\in \omega$ such that $B(x,\alpha) \subseteq B(x, \alpha_n)$ for each $x\in X$.

Let $G$ be a group, $\mathcal{I}$ be an ideal in the Boolean algebra $\mathcal{P}_G$ of all subsets of $G$, i.e. $\varnothing \in \mathcal{I}$ and if $A,B\in\mathcal{I}$ and $A'\subseteq A$ then $A\cup B\in \mathcal{I}$ and $A'\in\mathcal{I}$. An ideal $\mathcal{I}$ is called a {\it group ideal} if, for all $A,B\in \mathcal{I}$, we have $AB\in \mathcal{I}$ and $A^{-1}\in\mathcal{I}$. For construction of group ideals see \cite{b16}.

For a $G$-space $X$ and a group ideal $\mathcal{I}$ on $G$, we define the ballean $\mathcal{B}(G,X,\mathcal{I})$ as the triple $(X,\mathcal{I},B)$ where $B(x,A)=Ax\cup \{x\}$. In the case where $\mathcal{I}$ is the ideal of all finite subsets of $G$, we omit $\mathcal{I}$ and return to the notation $B(x, A)$ used from the very beginning of the paper.

The following couple of theorems from \cite{b10, b15} demonstrate the tight interrelations between balleans and $G$-spaces.

\begin{theorem}
Every ballean $\B$ with the support $X$ is asymorphic to the ballean $\B(G,X,\I)$ for some subgroup $G$ of the group $S_X$ of all permutations of $X$ and some group ideal $\I$ on $G$.
\end{theorem}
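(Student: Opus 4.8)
The plan is to realize $\B$ by the identity map: I take $G=S_X$, so that $X$ is automatically a transitive $G$-space, and I construct a group ideal $\I$ on $S_X$ for which the identity $X\to X$ is an asymorphism from $\B=(X,P,B)$ onto $\B(S_X,X,\I)$. This amounts to the two coarse conditions: (a) for every radius $\alpha\in P$ there is $A_\alpha\in\I$ with $B(x,\alpha)\subseteq A_\alpha x\cup\{x\}$ for all $x$, and (b) for every $A\in\I$ there is $\gamma\in P$ with $Ax\cup\{x\}\subseteq B(x,\gamma)$ for all $x$. Everything is organised around the symmetric entourage
$$\widetilde E_\alpha=\{(x,y):y\in B(x,\alpha)\}\cup\{(x,y):x\in B(y,\alpha)\},$$
which contains the diagonal and whose $x$-section is $\widetilde E_\alpha[x]=B(x,\alpha)\cup B^*(x,\alpha)$.

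The key device is to cover each entourage by transpositions. For every unordered pair $\{x,y\}$ with $x\ne y$ and $(x,y)\in\widetilde E_\alpha$, let $\tau_{xy}\in S_X$ be the transposition exchanging $x$ and $y$, and put $A_\alpha=\{\tau_{xy}:\{x,y\}\subseteq X,\ x\ne y,\ (x,y)\in\widetilde E_\alpha\}$. Each $\tau_{xy}$ is an involution whose nontrivial moves lie inside $\widetilde E_\alpha$; since involutions are self-inverse, $A_\alpha^{-1}=A_\alpha$. I then let $\I$ be the group ideal on $S_X$ generated by $\{A_\alpha:\alpha\in P\}$, i.e. the family of all subsets of finite unions of finite products $A_{\alpha_1}\cdots A_{\alpha_n}$. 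Condition (a) is immediate: if $y\in B(x,\alpha)$ and $y\ne x$ then $\tau_{xy}\in A_\alpha$ and $\tau_{xy}x=y$, so $B(x,\alpha)\subseteq A_\alpha x\cup\{x\}$.

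For the reverse inclusions I would use the ballean axioms. First note directedness of $P$: from $x\in B(x,\alpha)$ and the second ballean axiom, $B(x,\alpha)\cup B(x,\beta)\subseteq B(x,\gamma)$ for a suitable $\gamma$. For a single generator, $\tau_{uv}x$ equals $x$ unless $x\in\{u,v\}$, in which case the image lies in $\widetilde E_\alpha[x]=B(x,\alpha)\cup B^*(x,\alpha)$; applying the first ballean axiom $B^*(x,\alpha)\subseteq B(x,\alpha')$ and directedness gives $A_\alpha x\cup\{x\}\subseteq B(x,\alpha'')$. For a product $g=t_1\cdots t_n$ with $t_i\in A_{\alpha_i}$, each factor displaces a point by at most one $\widetilde E$-step, so $gx\in(\widetilde E_{\alpha_1}\circ\cdots\circ\widetilde E_{\alpha_n})[x]$; iterating the second ballean axiom (together with the bound $\widetilde E_\beta[\,\cdot\,]\subseteq B(\cdot,\beta'')$) shows this is contained in $B(x,\gamma)$ for some $\gamma\in P$. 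Since a typical $A\in\I$ sits in a finite union of such products, directedness yields one $\gamma$ valid for all of $A$, which is condition (b). Hence the identity is an asymorphism and $\B\cong\B(S_X,X,\I)$.

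The only genuinely delicate point is condition (b), i.e. showing that the group ideal generated by the $A_\alpha$ produces no unbounded displacements; this is exactly where the two ballean axioms (comparability of $B$ and $B^*$, and controlledness of compositions) are used, and where one must check that the finitely many radii occurring in any fixed element of $\I$ can be absorbed into a single radius by directedness. The transposition trick is what makes the rest routine: by splitting each entourage into single edges rather than into large matchings, I avoid any infinite edge-colouring argument and keep every generating permutation an involution supported inside $\widetilde E_\alpha$, so that membership in $\I$ is controlled purely by the number of factors in a product. Finally, taking $G=S_X$ secures transitivity for free and remains consistent even when $\B$ is disconnected, since the smallness of $\I$ keeps each $B_\I(x,A)$ inside the $\B$-component of $x$.
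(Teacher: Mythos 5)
Your proof is correct and takes essentially the same route as the source the paper cites for this theorem (\cite{b10,b15}; the paper itself gives no proof): realize $\B$ by the identity map on $X$, take for each radius $\alpha$ a set of involutions whose displacement is confined to $B(x,\alpha)\cup B^*(x,\alpha)$, let $\I$ be the group ideal they generate in $S_X$, and use the two ballean axioms (symmetry of $B$ and $B^*$, plus the composition axiom, which as you note also yields directedness of $P$) to bound every finite product, hence every element of $\I$, by a single radius. Your only variation is generating with the transpositions $\tau_{xy}$, $(x,y)\in\widetilde{E}_\alpha$, rather than with all permutations of $\alpha$-bounded displacement; this changes nothing, since the transpositions already give $B(x,\alpha)\subseteq A_\alpha x\cup\{x\}$, the sets $A_\alpha$ are symmetric because they consist of involutions, and the boundedness of products is verified exactly as in the cited argument.
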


\begin{theorem}
Every metrizable ballean $\B$ with the support $X$ is asymorphic to the ballean $\B(G,X,\I)$ for some subgroup $G$ of $S_X$ and some group ideal $\I$ on $G$ with countable base such that, for all $x,y\in X$, there is $A\in\I$ such that $y\in Ax$.
\end{theorem}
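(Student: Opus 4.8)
The plan is to revisit the construction behind Theorem 7.1 and show that, when the support carries a metrizable ballean structure, it can be carried out with a group ideal possessing a countable base. Write $P$ for the set of radii of $\B$ and $B(x,\alpha)$ for the balls. First I would invoke the metrizability criterion quoted above (from \cite{b25}): there is a sequence $(\alpha_n)_{n\in\w}$ in $P$ such that for every $\alpha\in P$ there is $n\in\w$ with $B(x,\alpha)\subseteq B(x,\alpha_n)$ for all $x\in X$. Combining finitely many balls and then applying the second ballean axiom repeatedly, I may assume this sequence is increasing and closed under doubling: for any $m,n$ there is $k$ with $B(B(x,\alpha_m),\alpha_n)\subseteq B(x,\alpha_k)$ for all $x\in X$.

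Next I would take $G$ to be the group of all \emph{bounded} permutations of $X$, that is, those $g\in S_X$ for which there is $\alpha\in P$ with $gx\in B(x,\alpha)$ and $g^{-1}x\in B(x,\alpha)$ for every $x\in X$; the ballean axioms make $G$ a subgroup of $S_X$. For the ideal I would set
$$A_n=\{g\in G: gx\in B(x,\alpha_n)\ \text{and}\ g^{-1}x\in B(x,\alpha_n)\ \text{for all } x\in X\},$$
and let $\I$ be the group ideal on $G$ with base $\{A_n:n\in\w\}$. Using the doubling closure of $(\alpha_n)$ together with the axiom $B(B(x,\alpha),\beta)\subseteq B(x,\gamma)$, one checks $A_n^{-1}=A_n$, $A_m\cup A_n\subseteq A_k$ and $A_mA_n\subseteq A_k$ for suitable $k$, so that $\{A_n\}$ is genuinely the base of a group ideal; moreover every uniformly bounded subset of $G$ lies in some $A_n$, so $\I$ is precisely the ideal of uniformly bounded sets and therefore has a countable base.

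I would then verify that $\B$ is asymorphic to $\B(G,X,\I)$ by showing the two families of balls are mutually cofinal, where $B_{\B(G,X,\I)}(x,A_n)=A_nx\cup\{x\}$. The inclusion $A_nx\cup\{x\}\subseteq B(x,\alpha_n)$ is immediate from the definition of $A_n$. The reverse inclusion $B(x,\alpha)\subseteq A_nx\cup\{x\}$ for a suitable $n$ is the crux: one must cover the symmetric relation $E_\alpha=\{(x,y):y\in B(x,\alpha)\}\cup\{(x,y):x\in B(y,\alpha)\}$ by graphs of bounded permutations, so that every $y\in B(x,\alpha)$ has the form $gx$ for some $g\in G$ bounded by a fixed larger radius. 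This matching/permutation decomposition of a relation with uniformly bounded balls is exactly the content of Theorem 7.1, which I would cite from \cite{b10,b15} rather than reprove; taking the larger radius among the $\alpha_n$ yields the desired cofinality. This decomposition is the one genuinely nontrivial ingredient, and it is precisely what I borrow from the already-established result.

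Finally I would establish the connectivity clause. A metrizable ballean is connected, being asymorphic to a metric ballean in which any two points lie at finite distance; hence for any $x,y\in X$ there is $\alpha\in P$ with $y\in B(x,\alpha)$. By the covering step there is a bounded permutation $g\in G$ with $gx=y$, and the singleton $\{g\}$ is uniformly bounded, so $\{g\}\in\I$ and $y\in\{g\}x$. Thus for all $x,y\in X$ there is $A\in\I$ with $y\in Ax$, which completes the argument. The new content beyond Theorem 7.1 is entirely the bookkeeping that converts the cofinal sequence of radii $(\alpha_n)$ into a cofinal sequence $(A_n)$ of ideal members; the asymorphism and the covering lemma are inherited from the construction established there.
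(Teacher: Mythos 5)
The paper itself gives no proof of this theorem --- it is imported verbatim from \cite{b10,b15} --- so the comparison is with the construction in those sources, which your scaffolding essentially reproduces: taking $G$ to be the group of all bounded permutations of $X$ and $\I$ the ideal with base the sets $A_n$ of permutations uniformly bounded by a cofinal sequence of radii is exactly the standard construction, and your group-ideal verifications ($A_n^{-1}=A_n$, $A_mA_n\subseteq A_k$ via the composition axiom, countable base from the metrizability criterion) and your connectivity clause are correct. Where you deviate is the covering step, which you outsource to Theorem 7.1, and two remarks are in order. First, there is a small gap in how you cite it: Theorem 7.1 as stated provides an asymorphism $f$ from $\B$ to some $\B(G',X,\I')$ which need not be the identity map, so ``every $y\in B(x,\alpha)$ has the form $gx$ for a bounded $g$'' is not literally its content; to extract the covering you must conjugate --- given $A'\in\I'$ with $f(B(x,\alpha))\subseteq A'f(x)\cup\{f(x)\}$, the permutations $f^{-1}g'f$ with $g'\in A'$ move $x$ to any prescribed $y\in B(x,\alpha)$ and are bounded by a single radius depending only on $A'$, using that $f^{-1}$ is a $\prec$-mapping and that $(A')^{-1}\in\I'$. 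With that supplement your argument closes. Second, the outsourcing is far heavier than necessary, and your diagnosis that this covering is ``the one genuinely nontrivial ingredient'' misplaces the difficulty: for $y\in B(x,\alpha)$ the transposition $\tau$ interchanging $x$ and $y$ is itself a bounded permutation, since $\tau x=y\in B(x,\alpha)$, $\tau y=x\in B^{*}(y,\alpha)\subseteq B(y,\alpha')$ by the first ballean axiom, and $\tau$ fixes everything else; hence all such transpositions lie in one $A_n$ and $B(x,\alpha)\subseteq A_nx\cup\{x\}$ follows in a line, making the whole proof self-contained without Theorem 7.1. A genuine matching (Hall-type) decomposition of the ball relation into \emph{finitely many} permutations is needed only for Theorem 7.4, where the ideal is $[G]^{<\omega}$ and the covering sets must be finite; conflating that situation with the present one, where members of $\I$ may be infinite, is the only real misstep in an otherwise sound proposal.
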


A ballean $\B=(X,P,B)$ is called {\it locally finite (uniformly locally finite) } if each ball $B(x,\alpha)$ is finite (for each $\alpha \in P$, there exists $n\in \mathbb{N}$ such that $|B(x, \alpha)| \leqslant n$ for every $x\in X$.
\begin{theorem}
Every locally finite ballean $\B$ with the support $X$ is asymorphic to the ballean $\B(G,X,\I)$ for some subgroup $G$ of $S_X$ and some group ideal $\I$ on $G$ with a base consisting of subsets compact in the topology of pointwise convergence on $S_X$.
\end{theorem}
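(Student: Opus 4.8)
The plan is to begin from the realization supplied by Theorem 7.1 and then replace the given group ideal by the family of closures of its symmetric members inside the space of all self-maps of $X$. First I would apply Theorem 7.1 to fix a subgroup $G\le S_X$ and a group ideal $\I$ on $G$ with $\B$ asymorphic to $\B(G,X,\I)$. Since the inverse of an asymorphism is injective and sends each ball into a single ball, local finiteness is an asymorphism invariant; hence $\B(G,X,\I)$ is locally finite, i.e. $B(x,A)=Ax\cup\{x\}$ is finite for every $x\in X$ and every $A\in\I$. Writing $\pi_x\colon X^X\to X$ for the evaluation $f\mapsto f(x)$, this says exactly that $\pi_x(A)=Ax$ is finite for all $x$ and all $A\in\I$, and finiteness of all these projections is precisely the hypothesis one needs for compactness in the product topology on $X^X$, whose restriction to $S_X$ is the topology of pointwise convergence.

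The heart of the argument, and the step I expect to be the main obstacle, is a closure construction together with the verification that the closures stay inside $S_X$. For $A\in\I$ put $C=A\cup A^{-1}\in\I$, so $C=C^{-1}$, and let $\overline{C}$ denote the closure of $C$ in $X^X$. Because $\pi_x(C)=Cx$ is finite, hence closed in the discrete space $X$, one has $\pi_x(\overline{C})=Cx$ for every $x$, so $\overline{C}$ is a closed subset of the compact product $\prod_{x\in X}Cx$ and is therefore compact. What must still be checked is that $\overline{C}\subseteq S_X$, i.e. that a pointwise limit of permutations drawn from $C$ is again a permutation; this is where the group-ideal structure is essential. Injectivity of $f\in\overline{C}$ is immediate: if $f(x)=f(y)$, a $g\in C$ agreeing with $f$ on $\{x,y\}$ forces $g(x)=g(y)$, so $x=y$. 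Surjectivity is the delicate point and uses symmetry of $C$: given $z\in X$, apply the definition of the closure to the finite set $F=Cz\cup\{z\}$ to obtain $g\in C$ agreeing with $f$ on $F$; since $g^{-1}z\in C^{-1}z=Cz\subseteq F$, we get $f(g^{-1}z)=g(g^{-1}z)=z$, so $z\in f(X)$. Thus each $\overline{C}$ is a compact subset of $S_X$ in the topology of pointwise convergence.

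Finally I would assemble these closures into the required ideal. Let $\mathcal D=\{\overline{A\cup A^{-1}}:A\in\I\}$, let $G'\le S_X$ be the subgroup generated by $\bigcup\mathcal D$, and let $\I'$ consist of all subsets of $G'$ contained in some member of $\mathcal D$. That $\I'$ is a group ideal with base $\mathcal D$ uses only that $S_X$ with pointwise convergence is a topological group (multiplication and inversion are continuous), so continuous images of compact sets are compact: $\overline{C}\cdot\overline{D}\subseteq\overline{CD\cup DC}$, $\overline{C}^{-1}=\overline{C}$ for symmetric $C$, and $\overline{C}\cup\overline{D}=\overline{C\cup D}$, whence every ideal operation stays inside $\mathcal D$ up to inclusion while each member of $\mathcal D$ is compact. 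It remains to see that the ball structure is unchanged: from $\pi_x(\overline{C})=Cx$ one gets $B(x,\overline{C})=\overline{C}x\cup\{x\}=Cx\cup\{x\}=B(x,C)$, and iterating $\pi_x$ along a product gives $\overline{C_1}\cdots\overline{C_k}\,x=C_1\cdots C_k\,x$. Hence the $\I'$-balls and the $\I$-balls are mutually cofinal for each fixed $x$, uniformly in $x$, which is exactly the statement that the identity map of $X$ is an asymorphism between $\B(G,X,\I)$ and $\B(G',X,\I')$. Composing with the asymorphism of Theorem 7.1 yields $\B\cong\B(G',X,\I')$ with $\I'$ admitting a base of compact subsets, as required.
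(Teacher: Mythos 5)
Your proof is correct, and it is sound at the one point you rightly identified as delicate: the surjectivity of a pointwise limit $f\in\overline{C}$, obtained by choosing $g\in C$ agreeing with $f$ on the finite set $F=Cz\cup\{z\}$ and observing $g^{-1}z\in C^{-1}z=Cz\subseteq F$, is exactly where symmetry of $C$ is needed, and the identity $\pi_x(\overline{C})=Cx$ simultaneously yields compactness (via $\overline{C}\subseteq\prod_{x\in X}Cx$) and the equality of balls. One caveat on the comparison: this paper states the theorem without proof, quoting it from \cite{b10,b15}, so the comparison is with those sources. There the compact base is constructed directly: one takes sets of bounded-displacement permutations such as $M_\alpha=\{f\in S_X: f(x)\in B(x,\alpha) \mbox{ and } f^{-1}(x)\in B(x,\alpha) \mbox{ for all } x\in X\}$, realizes balls inside orbits by transpositions $\tau_{x,y}$ with $y\in B(x,\alpha)$, and proves compactness of $M_\alpha$ by the same two mechanisms that carry your argument (closed subset of a product of finite balls; a pointwise limit of permutations with symmetric displacement bounds is a permutation). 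Your route is genuinely different in architecture: you black-box Theorem 7.1 and post-process its ideal $\I$ by symmetrizing each member and closing in $X^X$, so the compactness refinement becomes a formal consequence of Theorem 7.1 plus local finiteness (which, as you note, is an asymorphism invariant), and you never have to redo the realization of balls by permutations; the direct construction, by contrast, is self-contained and proves Theorem 7.1 and 7.3 in one sweep. Your verification of the group-ideal axioms for $\I'$ via $\overline{C}\cdot\overline{D}\subseteq\overline{CD\cup DC}$, $\overline{C}^{-1}=\overline{C}$ and $\overline{C}\cup\overline{D}=\overline{C\cup D}$ is fine; in a final write-up you should spend one line on why inversion is continuous on $S_X$ in the pointwise topology, since it fails for the monoid of injections: for bijections it is immediate because $f_\lambda^{-1}(z)=w$ iff $f_\lambda(w)=z$, so pointwise convergence at the single point $w=f^{-1}(z)$ suffices.
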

\begin{theorem}\label{t_7_4}
Every uniformly locally finite ballean $\B$ with the support $X$ is asymorphic to the ballean $\B(G,X,[G]^{<\omega})$ for some subgroup $G$ of $S_X$.
\end{theorem}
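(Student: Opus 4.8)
The plan is to keep the underlying set fixed and to exhibit the identity map $\mathrm{id}_X$ as an asymorphism between $\B=(X,P,B)$ and $\B(G,X,[G]^{<\omega})$, where $G$ is a subgroup of $S_X$ generated by involutions read directly off the balls of $\B$. Fix a radius $\alpha\in P$. By uniform local finiteness choose $n(\alpha)$ with $|B(x,\alpha)|\leqslant n(\alpha)$ for all $x$, and use the first ballean axiom together with the upward directedness of the radii (which the second axiom supplies: for $\alpha,\beta$ the $\gamma$ with $B(B(x,\alpha),\beta)\subseteq B(x,\gamma)$ dominates both $\alpha$ and $\beta$) to pick $\alpha'$ with $B(x,\alpha)\cup B^*(x,\alpha)\subseteq B(x,\alpha')$ for every $x$. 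Form the symmetric graph $\Gamma_\alpha$ on vertex set $X$ whose edges are the pairs $\{x,y\}$, $x\neq y$, with $y\in B(x,\alpha)$ or $x\in B(y,\alpha)$. Every neighbour of $x$ then lies in $B(x,\alpha')\setminus\{x\}$, so $\Gamma_\alpha$ has finite maximum degree $d_\alpha\leqslant n(\alpha')-1$.

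Next I decompose $\Gamma_\alpha$ into matchings. A graph of maximum degree $d$ admits a proper edge-colouring with $2d-1$ colours: well-order the edges and colour them in turn, each new edge meeting at most $2(d-1)$ already-coloured adjacent edges, so a free colour always remains. This transfinite greedy argument is insensitive to $|X|$. Each colour class $M^\alpha_i$ ($1\leqslant i\leqslant 2d_\alpha-1$) is a set of pairwise disjoint edges, so the involution $g^\alpha_i\in S_X$ transposing the two endpoints of every edge of $M^\alpha_i$ and fixing all other points is a well-defined permutation. Let $G$ be the subgroup of $S_X$ generated by all the $g^\alpha_i$, $\alpha\in P$, and set $F_\alpha=\{e,g^\alpha_1,\dots,g^\alpha_{2d_\alpha-1}\}\in[G]^{<\omega}$.

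It remains to verify both $\prec$-conditions. Forward: if $y\in B(x,\alpha)$ with $y\neq x$, then $\{x,y\}$ is an edge of $\Gamma_\alpha$, hence $y=g^\alpha_i x$ for the colour $i$ of that edge; thus $B(x,\alpha)\subseteq F_\alpha x=B(x,F_\alpha)$, so $\mathrm{id}_X\colon\B\to\B(G,X,[G]^{<\omega})$ carries the radius $\alpha$ into $F_\alpha$. Backward: each generator satisfies $g^\alpha_i x\in B(x,\alpha)\cup B^*(x,\alpha)\subseteq B(x,\alpha')$ (fixed points being trivial), so a single generator moves every point inside one ball; iterating the second ballean axiom along a word shows that every $g\in G$ obeys $gx\in B(x,\gamma_g)$ for some radius $\gamma_g$. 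Since finitely many radii have a common upper bound in the ball-containment preorder, every finite $F\subseteq G$ admits a $\gamma$ with $B(x,F)=Fx\cup\{x\}\subseteq B(x,\gamma)$. Hence $\mathrm{id}_X$ is an asymorphism. Note also that the $G$-orbits coincide with the connected components of $\B$, so no transitivity hypothesis on $\B$ is needed.

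The main obstacle is the edge-colouring step: the content of the theorem is precisely converting the purely local, finite degree bound on $\Gamma_\alpha$ into one finite palette of involutions valid simultaneously over the whole, possibly uncountable, support $X$. The transfinite greedy colouring dispatches this cleanly, and once the generators are in hand everything else is routine bookkeeping with the two ballean axioms and the definition $B(x,F)=Fx\cup\{x\}$.
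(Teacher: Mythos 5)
Your proof is correct, and it is essentially the argument behind the paper's Theorem 7.4, which is stated there without proof and deferred to \cite{b10,b15}: in those sources, too, one bounds the degree of the symmetrized ball-graph for each radius via uniform local finiteness, decomposes it into finitely many matchings, extends each matching to an involution of $X$, and takes $G$ to be the subgroup of $S_X$ these involutions generate, with the identity map serving as the asymorphism. Your transfinite greedy edge-colouring with $2d-1$ colours and the bookkeeping with the two ballean axioms (including the upward directedness of radii) are all sound.
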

We note that Theorem \ref{t_7_4} plays the key part in the proof of Theorem 6.1.

For ultrafilters on metric spaces and balleans we address the reader to \cite{b12, b20, b24}.

Department of Cybernetics

Taras Shevchenko National University,

Volodymyrs'ka St., 64,

01601 Kyiv, Ukraine

opetrenko72@gmail.com

i.v.protasov@gmail.com

\end{document}